\def\marker{\>\hbox{${\vcenter{\vbox{\hrule height 0.4pt\hbox{\vrule width 0.4pt height 6pt \kern6pt\vrule width 0.4pt}\hrule height 0.4pt}}}$}\>}

\documentclass[12pt]{article}
\usepackage{indentfirst}
\usepackage{epsfig}
\usepackage{fullpage}
\usepackage{amsmath,amsfonts,amssymb,amsthm,graphics,amscd}
\usepackage{pgf,tikz}
\usepackage{graphicx}
\usepackage{amsmath,amsfonts,amssymb,amsthm,graphics,amscd,pst-plot,pstricks}
\usepackage{graphicx}
\usepackage{subcaption}

\newtheorem{Theorem}{Theorem}[section]
\newtheorem {Lemma}{Lemma}[section]
\newtheorem {Proposition}[Theorem]{Proposition}

\theoremstyle{definition}

\newtheorem{Conjecture}{Conjecture}

\newcommand{\CC}{\mathcal{C}}

\usepackage[
pdfauthor={},
pdftitle={Hilton-Zhao vertex-splitting conjecture for triangle-free graphs},
pdfstartview=XYZ,
bookmarks=true,
colorlinks=true,
linkcolor=blue,
urlcolor=blue,
citecolor=blue,
bookmarks=false,
linktocpage=true,
hyperindex=true
]{hyperref}

\begin{document} 
\begin{center}
{\Large \bf Hilton-Zhao vertex-splitting conjecture for triangle-free graphs}

\vspace{6mm}
{\large Xuli Qi$^{a}$, Xin Tian$^{a}$, Gregory Gutin$^{b}$}
\vspace{4mm}

\baselineskip=0.25in
{\it $^a$ Department of Mathematics, Hebei University of Science and Technology, China.}\\
{\it $^b$ Department of Computer Science, Royal Holloway University of London, UK.}\\

\vspace{3mm}
\baselineskip=0.25in
\footnotetext{{\it E-mail addresses:}  {\tt xuliqi@hebust.edu.cn}; {\tt 2025110016@stu.hebust.edu.cn}; {\tt g.gutin@rhul.ac.uk}}
\vspace{3mm}
\end{center}

\begin{abstract}
Every graph obtained from a regular Class~$1$ graph by a
vertex-splitting is overfull and hence is of Class~$2$; the central question is whether the resulting graph is edge-chromatic critical.
We consider a revised version of the Hilton-Zhao vertex-splitting conjecture for triangle-free base graphs, excluding the known cubic exception that arises from the Wagner graph.
Let $G$ be a connected $n$-vertex triangle-free $\Delta$-regular Class~$1$ graph, and let $G^{*}$ be obtained from $G$ by splitting one vertex into two adjacent vertices.
We first show that any non-extreme vertex-splitting produces a
$\Delta$-critical graph provided $\Delta>\frac{n-1}{3}$. Consequently, for non-extreme vertex-splittings, the revised version of the conjecture is valid across its entire conjectured degree range, and indeed even under a slightly weaker degree condition.
We next turn to arbitrary vertex-splittings, extreme ones included. It is shown that $G^*$ is $\Delta$-critical provided $n \ge 10$ and $\Delta>\frac{2n-3}{5}$. In addition, for $n>24$, the non-strict condition $\Delta \ge \frac{2n-3}{5}$ is already sufficient.
This extends the bipartite range $\Delta>\frac{2n}{5}$ to the boundary range that the bipartite argument does not cover.
In particular, our arguments give a complete treatment of the case of triangle-free $6$-regular base graphs in the conjectured range. The proofs bring together structural properties of full-deficiency pairs, homomorphism theorems for dense triangle-free graphs, and Hamilton decompositions built via graph detachment.

{\small \it Keywords:} 
edge coloring; edge-chromatic critical graph; vertex-splitting;
triangle-free graph; full-deficiency pair; graph homomorphism;
Hamilton decomposition; graph detachment
\end{abstract}
	
\section{Introduction}
Unless otherwise stated, we consider simple connected graphs in this paper. Nevertheless, in Lemmas $2.8$ and $4.4$, we consider multigraphs, which are defined as graphs allowing parallel edges and loops. For general notation and terminology, we follow the book \cite{SSTF2012} of Stiebitz et al.; the specific conventions for multigraphs are detailed later. Let $G=(V(G),E(G))$ be a graph, where $V(G)$ is the vertex set and $E(G)$ is the edge set of $G$. Two graphs $G$ and $H$ are {\bf isomorphic}, denoted by $G \cong H$, if there exists a bijection $f:V(G) \to V(H)$ such that $uv \in E(G)$ if and only if $f(u)f(v) \in E(H)$. Let $\mathbb{Z}$ denote the set of integers. For two integers $s$ and $t$, let $[s,t]:=\{i \in \mathbb{Z}:s \le i \le t\}$.  

For $u,v \in V(G)$, let $\operatorname{dist}_G(u,v)$ be the length of a shortest path between $u$ and $v$ in $G$. For a vertex $u \in V(G)$ and a subset $S \subseteq V(G)$, define $\operatorname{dist}_G(u,S):= \min_{s \in S} \operatorname{dist}_G(u,s)$. We also define $N_G(u):= \{v \in V(G):\operatorname{dist}_G(u,v)=1\}$, $d_G(u):= |N_G(u)|$, and $N_G[u]:= N_G(u) \cup \{u\}$. Here $d_G(u)$ is the degree of $u$, $N_G(u)$ is its open neighborhood, and $N_G[u]$ is its closed neighborhood. Denote by $\Delta(G)$ and $\delta(G)$ (or simply $\Delta$ and $\delta$) respectively the maximum degree and minimum degree of graph $G$. A graph $G$ is called $\Delta$-regular if all vertices of $G$ have the same degree $\Delta$, that is, $\Delta(G)=\delta(G)=\Delta$. 

A {\bf proper $k$-edge-coloring} of $G$ is a map $\varphi:E(G)\rightarrow\ [1,k]$ that assigns to every edge $e$ of $G$ a color $\varphi(e) \in [1,k]$ such that no two adjacent edges of $G$ receive the same color. Let $\CC^k(G)$ denote the collection of all proper $k$-edge-colorings of $G$. The {\bf chromatic index} $\chi'(G)$ is the least integer $k$ such that $\CC^k(G)\ne\emptyset$. By the celebrated results of Vizing \cite {Vizing1965-1} and Gupta \cite{Gupta1967}, every simple graph satisfies $\Delta(G)\le \chi'(G)\le\Delta(G)+1$. Following Fiorini and Wilson \cite {FW1977}, $G$ is of {\bf Class 1} if $\chi'(G)=\Delta(G)$; otherwise, it is of {\bf Class 2}. Determining whether an arbitrary graph belongs to Class 1 is computationally difficult; Holyer \cite{Hoyler1981} proved that this decision problem is NP-complete. We know that each color class in a proper edge-coloring is a matching and therefore contains at most $\lfloor |V(G)|/2 \rfloor$ edges. Consequently, $\chi'(G) \ge |E(G)|/\lfloor |V(G)|/2 \rfloor$. We say that $G$ is {\bf overfull} if $|E(G)|>\Delta(G) \lfloor |V(G)|/2 \rfloor$. Hence, an overfull simple graph has chromatic index $\Delta(G)+1$ and is therefore of Class 2. Notice also that an overfull graph necessarily has odd order. 
	
A graph $G$ is called {\bf edge-chromatic critical} if $\chi'(H)<\chi'(G)$ for every proper subgraph $H$ of $G$. In particular, if $G$ is edge-chromatic critical and $\chi'(G)=\Delta(G)+1$, then $G$ is called a {\bf $\Delta$-critical graph}, where $\Delta:=\Delta(G)$. In this paper, we are concerned with sufficient conditions for a Class 2 graph to be edge-chromatic critical. Let $G$ be a graph and $e \in E(G)$. We write $G-e$ for the graph obtained from $G$ by deleting the edge $e$. An edge $e$ is called a {\bf critical edge} if $\chi'(G-e)<\chi'(G)$. Thus, a Class 2 graph is edge-chromatic critical precisely when every edge is critical. A {\bf vertex-splitting} in a graph $G$ at a vertex $v$ forms a new graph $G^*$: replacing $v \in V(G)$ with two new adjacent vertices $v_1$ and $v_2$ and partitioning $N_G(v)$ into two nonempty subsets that serve as the set of neighbors of $v_1$ and $v_2$ from $V(G)$ in $G^*$, respectively. We say the new graph $G^\ast$ is obtained from $G$ by a vertex-splitting. We call this vertex-splitting {\bf extreme} if $\max\{d_{G^\ast}(v_{1}),d_{G^\ast}(v_{2})\}=\Delta(G)$. Otherwise, we call it \textbf{non-extreme}; that is, $d_{G^\ast}(v_{1})<\Delta(G)$ and $d_{G^\ast}(v_{2})<\Delta(G)$. Vertex-splitting was called the {\rmfamily ``M\"obius-type gluing technique''} in \cite{BV2020,Vietri2015}. It is easy to check that a regular Class 1 graph has even order, and that every graph obtained from a regular graph of even order by a vertex-splitting is overfull.  
	
Let $\Gamma$ (see Figure~\ref{fig:mobius_ladder}) denote the Wagner graph, i.e., eight-vertex Möbius ladder with $V(\Gamma)=\{x_0,\dots,x_7\}$ and $N_{\Gamma}(x_i)=\{x_{i-1},x_{i+1},x_{i+4}\}$, where the subscripts are taken modulo $8$. Splitting a vertex does not always produce an edge-chromatic critical graph. As noted in \cite{HZ1997,Yap1981}, the graph obtained from the Wagner graph \(\Gamma\) by subdividing one of its four opposite chords $\{x_0x_4,x_1x_5,x_2x_6,x_3x_7\}$ is not edge-chromatic critical. 
Therefore, excluding the known cubic exception mentioned above, we consider a revised version of the Hilton-Zhao conjecture \cite{HZ1997} as follows.

\begin{Conjecture}[Vertex-splitting conjecture]

Let $G$ be an $n$-vertex $\Delta$-regular Class~$1$
graph satisfying $\Delta>\frac{n}{3}$,
and let $G^{*}$ be obtained from $G$ by a vertex-splitting. If $G^{*}$ is not isomorphic to the graph obtained from the
Wagner graph $\Gamma$ by subdividing one of its four opposite chords, then $G^{*}$ is $\Delta$-critical.
\end{Conjecture}

\begin{figure}[htbp]
    \begin{center}
        \centering
        \scalebox{0.4}{\includegraphics{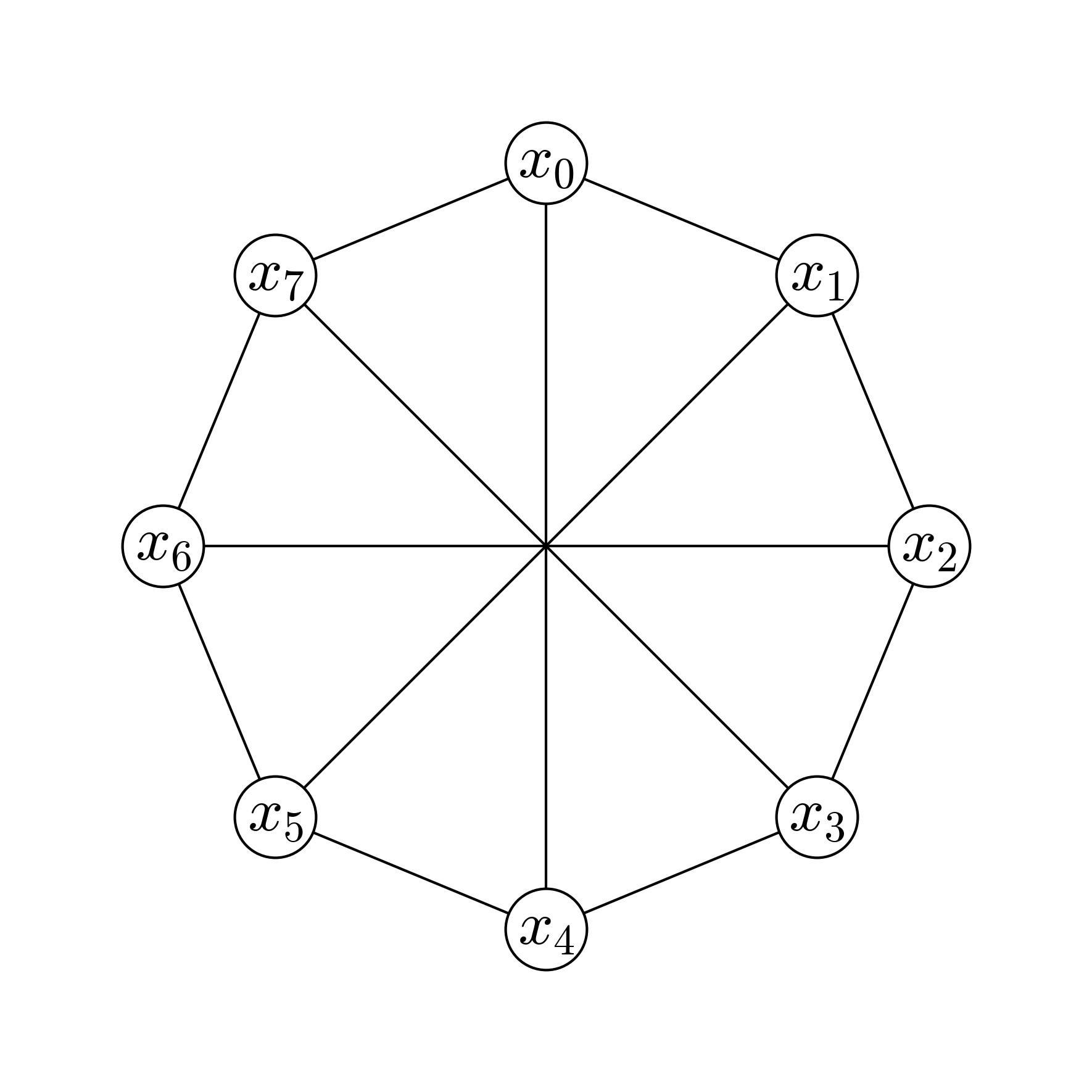}}
        \caption{Wagner graph $\Gamma$}
        \label{fig:mobius_ladder}
    \end{center}
\end{figure}

Since any graph obtained from a regular Class~$1$ graph by a vertex-splitting is overfull and hence is of Class~$2$, the central question is whether the resulting graph is edge-chromatic critical.
Some additional small-degree cases are covered by the classifications of small-order critical graphs in \cite{BBG2005, BS1998, CY1983}. Several partial results have been obtained. Hilton and Zhao \cite{HZ1997} proved the conjecture for graphs $G$ satisfying $\Delta(G)\ge\frac{\sqrt{7}-1}{2}n+2\approx0.82n$. In 2002, 
Song \cite{Song2002} verified the conjecture for non-extreme
vertex-splittings when diameter at most two,
in particular when $\Delta(G)\ge 0.5n$.
In 2022, Cao, Chen, and Shan \cite{CCS2022} verified the conjecture under the condition $\Delta(G)\ge0.75n$. Recently, Qi and Feng \cite{QF2026} proved the conjecture for $\Delta(G) \ge \frac{2n-2}{3}$.

In this paper, we consider a revised version of the Hilton-Zhao vertex-splitting conjecture for triangle-free base graphs. Let $G$ be an $n$-vertex triangle-free $\Delta$-regular Class~$1$ graph, and let $G^{*}$ be obtained from $G$ by a vertex-splitting. Our first result shows that, for non-extreme vertex-splittings, the revised version of the conjecture is valid across its entire conjectured degree range, and indeed even under a slightly weaker degree condition. We also give a complete treatment of triangle-free $6$-regular base graphs in the conjectured range. Finally, for arbitrary vertex-splittings, extreme ones included, we extend the bipartite range $\Delta>\frac{2n}{5}$ to the boundary range that the bipartite argument does not cover. The precise statements of our four main results are given below.

\begin{Theorem}\label{T1.1}
Let $G$ be an $n$-vertex  triangle-free $\Delta$-regular Class 1 graph with $\Delta(G)>\frac{n-1}{3}$ and $G^\ast$ be obtained from $G$ by splitting a vertex $v$ into two adjacent vertices $a$ and $b$. If $d_{G^\ast}(a)<\Delta(G)$ and $d_{G^\ast}(b)<\Delta(G)$, then $G^\ast$ is $\Delta$-critical.  
\end{Theorem}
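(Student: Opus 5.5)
Since $G^\ast$ is overfull, it is Class~2 with $\Delta(G^\ast)=\Delta$. So it suffices to show that for every edge $e\in E(G^\ast)$ the graph $G^\ast-e$ has a proper $\Delta$-edge-coloring. The plan is to reduce this to a statement about $\Delta$-edge-colorings of $G$ itself.

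Fix $\varphi\in\CC^\Delta(G)$. Every color appears at $v$, and $N_G(v)=A\cup B$, where $A=N_{G^\ast}(a)\setminus\{b\}$ and $B=N_{G^\ast}(b)\setminus\{a\}$. Write $\varphi(A)$ and $\varphi(B)$ for the sets of colors on the edges from $v$ to $A$ and to $B$; these partition $[1,\Delta]$. Non-extremality gives $|A|,|B|\le \Delta-2$, so $|\varphi(A)|,|\varphi(B)|\ge 2$. Transferring $\varphi$ to $G^\ast-ab$ gives a $\Delta$-coloring immediately, so $ab$ is critical.

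For any other edge $e$, deleting $e$ creates two vertices with a missing color. We then need to recolor so that $a$ and $b$ have a common missing color, which can be placed on $ab$. Concretely, I would take $\alpha\in\varphi(A)$ and $\beta\in\varphi(B)$ and consider the $(\alpha,\beta)$-Kempe subgraph of $G-v$, together with the end-vertices of $e$ in $G^\ast-e$. In $G^\ast$, vertex $a$ misses $\varphi(B)$ and $b$ misses $\varphi(A)$. A Kempe swap on a component that starts at $a$'s $\alpha$-edge and does not terminate at $b$ makes $\alpha$ missing at both $a$ and $b$. The obstruction is that every such $(\alpha,\beta)$-path runs from $a$ to $b$. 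The deleted edge $e$ is what breaks this: choose $\alpha$ or $\beta$ to be $\varphi(e)$ (after permuting colors), so that the path is cut at $e$. This has to be done separately for edges incident to $a$ or $b$, for edges at distance $1$, and for edges at distance $\ge 2$ from $\{a,b\}$.

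The hard case is an edge far from $\{a,b\}$, where a single path may avoid $e$. Here I would use a counting argument, which is where triangle-freeness and $\Delta>\frac{n-1}{3}$ enter. Since $G$ is triangle-free, $A$ and $B$ are independent sets. Because $\Delta>\frac{n-1}{3}$, the sets $N(v)$, $N(x)$ and $N(y)$ for $e=xy$ cannot be pairwise disjoint together with the vertices $v,x,y$: otherwise $3\Delta+3\le n+2$, which contradicts the hypothesis. Hence $\operatorname{dist}(v,e)\le 2$. So every edge is within distance two of $v$, and the Kempe path can be forced through $e$ via a short detour: pick $\alpha=\varphi(xw)$ for a common neighbor $w$ of $v$ and $x$, and so on.

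The main obstacle is the subcase $\operatorname{dist}(v,\{x,y\})=2$ with $e$ not adjacent to $N(v)$. There I expect to need the $\ge 2$ choices of colors on each side (non-extremality) and a second Kempe swap to reroute the $a$--$b$ path through $e$. I would try an argument on a ``full-deficiency pair'' type structure: assume no choice works and derive that $a$ and $b$ behave like an extreme split, contradicting $|A|,|B|\le\Delta-2$.
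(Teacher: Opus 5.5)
Your closing count is correct and is exactly the paper's last step: triangle-freeness and $\Delta>\frac{n-1}{3}$ force every edge $xy$ of $G^\ast$ to have an endpoint within distance $2$ of $\{a,b\}$. The gap is the other half, which carries all the difficulty. You never show that an edge $xy$ with $\operatorname{dist}(\{x,y\},\{a,b\})\le 2$ is critical. The distance-$1$ case ends with ``and so on,'' and for distance $2$ you only say you expect a second Kempe swap and would try some argument.

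This cannot be filled by the generic idea of cutting the $a$--$b$ $(\alpha,\beta)$-path at $e$. The Wagner graph is triangle-free, cubic and Class~$1$, and it satisfies $3>\frac{8-1}{3}$. Splitting $x_0$ extremely (subdividing the chord $x_0x_4$) makes $x_2x_6$ non-critical, and $x_2$ is at distance $2$ from $b=x_0$. So at distance $2$ the hypothesis $d_{G^\ast}(a),d_{G^\ast}(b)<\Delta$ must enter in an essential way, and nothing in your sketch does that. Working from one fixed $\varphi\in\CC^\Delta(G)$ is also too rigid; the right setting is an arbitrary $\Delta$-coloring of $G^\ast-e-ab$.

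The paper supplies this step by contradiction plus a cited structural lemma. If $xy$ is not critical, set $H=G^\ast-xy$. Then the following hold:
\begin{itemize}
\item $\chi'(H)=\Delta+1$ and $\Delta(H)=\Delta$.
\item Lemma~2.2 gives $\{x,y\}\cap\{a,b\}=\emptyset$, so $d_H(x)=d_H(y)=\Delta-1$ and $(a,b)$ is still a full-deficiency pair.
\item $ab$ is critical in $H$, because $H-ab\subseteq G^\ast-ab$.
\end{itemize}
Consequently, in every $\Delta$-coloring of $H-ab$ the missing-color sets at $a$ and $b$ partition $[1,\Delta]$.

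Lemma~2.1 (Cao--Chen--Shan) then says that every vertex of $H$ other than $a,b$ within distance $2$ of $\{a,b\}$ has degree $\Delta$ in $H$. The distance-$2$ clause is precisely where both $d_H(a)<\Delta$ and $d_H(b)<\Delta$ are used. Hence $x,y$ are at distance at least $3$, and your count gives $\Delta\le\frac{n-1}{3}$. To complete your argument you must either invoke this lemma or genuinely reprove its distance-$1$ and distance-$2$ parts.
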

	
\begin{Theorem}\label{T1.2}
Let $G$ be an $n$-vertex triangle-free 6-regular Class 1 graph. If $6>\frac{n}{3}$, then every graph obtained from $G$ by a vertex-splitting is 6-critical.
\end{Theorem}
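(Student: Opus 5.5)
The plan is to reduce Theorem~\ref{T1.2} to Theorem~\ref{T1.1} together with a direct treatment of the extreme case, which here means small order. Since $G$ is $6$-regular, Class~$1$, and triangle-free, we have $n$ even and $n\ge 12$, because a triangle-free $\Delta$-regular graph has at least $2\Delta$ vertices. The hypothesis $6>n/3$ gives $n<18$, so $n\in\{12,14,16\}$. In all three cases $6>\frac{n-1}{3}$, so every non-extreme vertex-splitting is $6$-critical by Theorem~\ref{T1.1}. It therefore remains to handle extreme splittings, where $v$ splits into $a$ and $b$ with $d_{G^*}(a)=6$ and $d_{G^*}(b)=2$, i.e.\ $b$ keeps exactly one original neighbour $w$ of $v$.

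For the extreme case I would first settle the structure of $G$ for each order. For $n=12$, a triangle-free $6$-regular graph on $2\Delta$ vertices must be $K_{6,6}$. This case is bipartite, and since $6>\frac{2\cdot 12}{5}$, the bipartite range (equivalently, the general theorem for $\Delta>\frac{2n-3}{5}$) applies; I would simply invoke that. For $n=14$ we have $6>\frac{2n-3}{5}=5$, so the general extreme-case theorem stated after Theorem~\ref{T1.2} handles it, provided its hypothesis $n\ge 10$ holds, which it does. If I cannot cite that theorem because its proof comes later, I would instead redo its argument directly. For $n=16$ we have $\frac{2n-3}{5}=5.8<6$, so the same theorem again applies. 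So the bulk of the proof reduces to checking that $(n,\Delta)\in\{(12,6),(14,6),(16,6)\}$ lies in the range of the extreme-case theorem.

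If a self-contained argument is preferred, I would follow the standard strategy. By overfullness, it suffices to show that for every edge $e$ of $G^*$, the graph $G^*-e$ has a $6$-edge-coloring. Edges incident to $a$ or $b$ are handled by taking a $6$-coloring of $G$, in which every color appears at $v$, and redistributing the colors between $a$ and $b$. For a general edge $e=xy$, I would use the full-deficiency-pair machinery: a $6$-coloring of $G^*-e$ exists iff one can find a suitable near-perfect coloring structure, which in the dense triangle-free regime comes from the homomorphism theorems. Specifically, triangle-free graphs with $\delta>\frac{2n}{5}$ are bipartite, and for $n=16$ with $\delta=6>\frac{3n}{8}$ they are homomorphic to $C_5$ (Häggkvist / Jin). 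The resulting blow-up structure allows building a Hamilton decomposition via detachment, which yields the needed colorings.

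The main obstacle I expect is the $n=16$ case, where $G$ may be non-bipartite, namely a $C_5$-blow-up type graph, and the edge $e$ may be far from $v$. There one must explicitly construct a $6$-coloring of $G^*-e$ in which the two deficiency vertices line up correctly. I would attempt this first by using the $C_5$-homomorphism to partition $V(G)$ into five independent classes and decomposing into Hamilton cycles plus perfect matchings through detachment.
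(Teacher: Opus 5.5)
\textbf{Gap: the extreme splitting with $n=16$ is not handled.} Your treatment of $n\in\{12,14\}$ is fine: there $6>\frac{2n}{5}$, so $G$ is bipartite and Lemma~\ref{L2.4} applies. Your use of Theorem~\ref{T1.1} for non-extreme splittings is also fine. The only case with real content, an extreme splitting when $n=16$, is not handled, for two reasons.

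\emph{The citation is circular.} Citing Theorem~\ref{T1.4} for $n=16$ does not work, because $(n,\Delta)=(16,6)$ gives $2n-5\Delta=2$. This is exactly the triple for which the paper's proof of Theorem~\ref{T1.4} invokes Theorem~\ref{T1.2} itself, so there is no independent argument to ``redo''.

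\emph{The fallback rests on a false inequality.} For $n=16$ you have $\frac{3n}{8}=6$, so $\delta=6$ does \emph{not} exceed $\lfloor 3n/8\rfloor$, and H\"aggkvist's theorem (Lemma~\ref{L2.7}) does not apply. This is not a technicality. Take the Wagner graph $\Gamma$, replace each vertex by an independent pair and each edge by $K_{2,2}$. The result is a triangle-free $6$-regular Class~$1$ graph on $16$ vertices. It contains $\Gamma$, and $\Gamma$ is not homomorphic to $C_5$, so this graph is not homomorphic to $C_5$ either; it shows H\"aggkvist's bound is sharp. Your plan never treats this graph.

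\textbf{What is missing.}
\begin{enumerate}
\item[(i)] The right dichotomy. Brandt's theorem (Lemma~\ref{L2.8}) uses only $\delta>\frac{n}{3}$ and gives: either $G\to C_5$, or $G$ contains $\Gamma$.
\item[(ii)] A structural step in the second branch. Using $\alpha(\Gamma)=3$ and counting edges between $\Gamma$ and the other eight vertices, one shows $G$ must be exactly the blow-up described above.
\item[(iii)] The reduction you only gesture at. By Lemma~\ref{L4.1}, a non-critical edge $xy$ must lie in $G[R]$. By Lemma~\ref{L4.2}, it then suffices to find a Hamilton decomposition of $G$ with $bc$ and $xy$ on the same cycle.
\end{enumerate}

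\textbf{How (iii) plays out for the blow-up.} If $bc$ comes from a rim edge of $\Gamma$, then $G[R]$ is edgeless and there is nothing to prove. If $bc$ comes from a chord, then $G[R]\cong C_4$, and you must exhibit such a decomposition explicitly; the paper writes down three Hamilton cycles. The chord case is genuinely delicate: it is the blow-up analogue of the known cubic exception, where subdividing a chord of $\Gamma$ does not give a critical graph. So no general argument can be expected to dispose of it.

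\textbf{The $C_5$ branch also needs specifics.} ``Detachment yields the needed colorings'' must be replaced by the content of Lemma~\ref{L4.4}. With $2n-5\Delta=2$, the part sizes are forced to be $(r+1,r+1,r,r-1,r)$ with $r=3$, and $V_0\cup V_1$ induces $K_{r+1,r+1}$ minus a perfect matching. Then, for each possible position of $bc$, one constructs a Hamilton decomposition placing $bc$ and $xy$ on a common cycle.
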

	
\begin{Theorem}\label{T1.3}
Let $G$ be an $n$-vertex  triangle-free $\Delta$-regular Class 1 graph. If $\Delta(G) \ge \frac{2n-3}{5}$ with $n>24$, then every graph obtained from $G$ by a vertex-splitting is $\Delta$-critical.
\end{Theorem}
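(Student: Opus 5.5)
Let $v$ be split into $a$ and $b$. By Theorem~\ref{T1.1} we may assume the splitting is extreme, say $d_{G^\ast}(a)=\Delta$. Then $b$ has exactly one neighbour $w$ in $V(G)$, and $G^\ast$ is $G$ with the edge $vw$ subdivided by $b$. Since $G^\ast$ is overfull, it is of Class~2. To show it is $\Delta$-critical it suffices to show that every edge $e$ of $G^\ast$ is critical, i.e.\ $G^\ast-e$ has a $\Delta$-edge-colouring. The standard tool is the full-deficiency-pair framework: $G^\ast-e$ is $\Delta$-colourable if and only if there is a $\Delta$-colouring of $G^\ast-e-f$ for a suitable edge $f$, such that the two endpoints of $f$ form a pair whose deficiencies can be repaired. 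Equivalently, we need a $\Delta$-colouring of $G-\{x,y\}$-type subgraphs in which the missing colours match up.

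I would reduce criticality of $e$ to the existence of a $1$-factor (or a perfect matching avoiding prescribed edges) in $G$ through prescribed vertices. Concretely, for an edge $e=xy$ not incident with $v$, colour $G^\ast-e$ by taking a perfect matching $M$ of $G$ containing $vw$ and avoiding $e$'s neighbourhood constraints, with $G-M$ of Class~1 (degree $\Delta-1$). Colour $G-M$ with $\Delta-1$ colours, and use the last colour on $M\setminus\{vw\}$ together with the subdivided path $a b w$ and the edge $e$ removed. Using a Kempe-chain argument, the two deficient vertices $x,y$ then absorb the missing colour at $b$. The edges incident with $a$, $b$, and $w$ are handled separately by direct recolouring along $(\alpha,\beta)$-chains.

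The existence of the needed structure comes from density. With $\Delta\ge\frac{2n-3}{5}$ and $G$ triangle-free, the homomorphism theorems for dense triangle-free graphs (Andr\'asfai--Erd\H{o}s--S\'os / Jin / Chen--Jin--Koh type: triangle-free graphs with minimum degree $>\frac{10n}{29}$ are homomorphic to an Andr\'asfai graph, and above $\frac{2n}{5}$ they are bipartite) give two cases. If $G$ is bipartite, the known bipartite argument applies, and regular bipartite graphs decompose into perfect matchings avoiding or containing any prescribed edge. If $\Delta$ lies in the narrow window $\frac{2n-3}{5}\le\Delta\le\frac{2n}{5}$, then $G$ is a blow-up of $C_5$ or of the M\"obius-ladder Andr\'asfai graph with near-equal parts. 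Here I would use regularity to pin down the part sizes, essentially forcing a balanced $C_5$-blow-up up to a few deleted edges. In that structure I would build a Hamilton decomposition of $G$ (or of $G$ minus a perfect matching) via graph detachment: amalgamate each part into a single vertex to obtain a $\Delta$-regular multigraph on $5$ (or $8$) vertices, Hamilton-decompose it, and detach it to get a Hamilton decomposition of $G$ with prescribed edges in prescribed cycles. Each even Hamilton cycle gives two colours, so the prescribed edges $vw$ and $e$ can be placed conveniently to obtain the $\Delta$-colouring of $G^\ast-e$.

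The main obstacle is the boundary window. Identifying $G$ exactly from the homomorphism theorem when $\Delta$ is just below $2n/5$ is delicate, since a blow-up of $C_5$ with $n$ not divisible by $5$ is not regular. The detachment step must also respect parity, since the number of cycles and the 1-factor must be right. The hypothesis $n>24$ is where I expect the counting to close: for small $n$ the non-strict inequality admits other Andr\'asfai-type graphs, and the detachment lemma's hypotheses on part sizes fail. I would first do careful counting of the part sizes under regularity, and then apply the detachment lemma to the $5$-vertex amalgam.
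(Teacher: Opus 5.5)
Your proposal has genuine gaps. The colouring step is unjustified and can fail, the localisation that makes the problem tractable is missing, and the structural classification is deferred to "careful counting".

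\textbf{The colouring step.} Start from a $\Delta$-colouring of $G$ with $vw$ coloured $\alpha$ and $e=xy$ coloured $\gamma$, and delete both edges. Then $a$ and $w$ miss only $\alpha$, and the only chain that can separate them is the $(\alpha,\gamma)$-chain from $a$. If that chain ends at $w$ rather than at $x$ or $y$, nothing is gained. An arbitrary perfect matching $M\ni vw$ gives no control over this.

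No argument that ignores the structure forced by $n>24$ can work here. The Wagner graph is triangle-free, $3$-regular, Class~$1$ and satisfies $\Delta=3\ge\frac{2\cdot 8-3}{5}$, yet subdividing a chord produces a non-critical edge. What works is to put $vw$ and $xy$ on a common Hamilton cycle coloured alternately (Lemma~\ref{L4.2}). You reach this later, but your plan then needs such a decomposition for \emph{every} edge disjoint from $vw$, and you give no argument for that.

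\textbf{The missing localisation.} The paper avoids that burden with Lemma~\ref{L4.1}. This is what the full-deficiency machinery is actually used for, not the colouring reformulation you describe. Suppose $xy$ is not critical and let $H=G^*-xy$. In your notation, $(a,b)$ and then $(b,w)$ are full-deficiency pairs in $H$. Their edges $ab$ and $bw$ are critical by Lemma~\ref{L2.2}. Lemma~\ref{L2.1}(1) then gives every vertex of $N_G(v)\cup N_G(w)$ degree $\Delta$ in $H$. Hence $x,y\in R:=V(G)\setminus(N_G[v]\cup N_G[w])$, and only the few edges of $G[R]$ remain to be treated.

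\textbf{The structural analysis.} Your intuition about $n>24$ is close, but it enters exactly through $\frac{2n-3}{5}>\frac{3n}{8}$. So H\"aggkvist's theorem (Lemma~\ref{L2.7}) gives $G\to C_5$ outright, and M\"obius-ladder blow-ups never arise. In the non-bipartite case, Lemma~\ref{L2.3} gives $t:=2n-5\Delta\in\{0,1,2,3\}$.

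The counting you postpone is Lemma~\ref{L4.31}. Let $p_i=n_{i-1}+n_{i+1}-\Delta\ge 0$ and let $q_i$ be the number of non-adjacent pairs between $V_i$ and $V_{i+1}$. Then $q_{i-1}+q_i=n_ip_i$ and $\sum_i p_i=t$, so every positive $p_i$ has a positive neighbour. The cases then go as follows.
\begin{itemize}
\item[$t=1,3$:] These are excluded by the counting. You cannot skip this, since there $\Delta$ is odd and no Hamilton decomposition exists. Your fallback of decomposing $G$ minus a perfect matching is not developed.
\item[$t=0$:] All consecutive pairs are complete, so $R$ is a single part. It is independent, and nothing needs constructing.
\item[$t=2$:] The part sizes are forced to be $r+1,r+1,r,r-1,r$ with $\Delta=2r$, and $G[V_0\cup V_1]\cong K_{r+1,r+1}-M$. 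Here $G[R]$ is empty, a single edge, or a star.
\end{itemize}
For $t=2$ the required Hamilton decomposition comes from detaching only $V_2,V_3,V_4$. The set $V_0\cup V_1$ is kept intact with a prescribed $1$-factorization (Lemmas~\ref{L4.3} and~\ref{L4.4}), and one then uses that the vertices within $V_2$, and within $V_4$, are twins. Amalgamating all five parts, as you suggest, gives a multigraph that is not $\Delta$-regular. It also loses control over which cycle receives $vw$ and $xy$.
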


\begin{Theorem}\label{T1.4}
Let $G$ be an $n$-vertex triangle-free $\Delta$-regular Class 1 graph. If $\Delta(G)>\frac{2n-3}{5}$ with $n \ge 10$, then every graph obtained from $G$ by a vertex-splitting is $\Delta$-critical.
\end{Theorem}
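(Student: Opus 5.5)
We prove Theorem~\ref{T1.4} by reducing to the two earlier theorems and handling what remains directly. A graph obtained from a regular Class~1 graph by a vertex-splitting is overfull, hence Class~2. So it suffices to show that every edge $e$ of $G^*$ is critical, i.e.\ $G^*-e$ admits a $\Delta$-edge-coloring. Non-extreme splittings are already covered: $\frac{2n-3}{5}\ge \frac{n-1}{3}$ once $n\ge 4$, so Theorem~\ref{T1.1} applies. When $n>24$, Theorem~\ref{T1.3} covers every splitting, since $\Delta>\frac{2n-3}{5}$ implies $\Delta\ge\frac{2n-3}{5}$. Hence only extreme splittings with $10\le n\le 24$ remain. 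In that case $d_{G^*}(a)=\Delta$ and $d_{G^*}(b)=2$, so $b$ has exactly one neighbour $w\in N_G(v)$ besides $a$.

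The remaining cases are handled in three steps.

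\emph{Step 1: enumerate the parameters.} Since $G$ is triangle-free and $\Delta$-regular, $\Delta\le n/2$. Since $G$ is regular and Class~1, $n$ is even. Together with $\frac{2n-3}{5}<\Delta\le\frac n2$ this leaves a short list of pairs $(n,\Delta)$. For example, $n=10$ gives $\Delta=4,5$; $n=12$ gives $\Delta=5,6$; and in general $\Delta\in\{\lceil (2n-2)/5\rceil,\dots,n/2\}$ up to $n=24$.
\begin{itemize}
\item If $\Delta=n/2$, then $G\cong K_{n/2,n/2}$ (a triangle-free graph with $\delta\ge n/2$ and $n$ even is complete bipartite balanced).
\item If $\Delta=6$, Theorem~\ref{T1.2} applies, because $6>\frac{2n-3}{5}$ forces $n\le 16<18$.
\end{itemize}

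\emph{Step 2: the case $\Delta>2n/5$.} Here a dense triangle-free graph with $\delta>2n/5$ is bipartite (Andr\'asfai--Erd\H{o}s--S\'os). The bipartite range $\Delta>2n/5$ is then treated by the bipartite argument the paper refers to, which I expect to be an earlier lemma: for a regular bipartite $G$ one uses $\Delta$ perfect matchings (K\"onig) and recolors around the deleted edge.

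\emph{Step 3: the boundary window $\frac{2n-3}{5}<\Delta\le\frac{2n}{5}$.} Since $n$ is even, this window can only contain $\Delta=2n/5$ when $5\mid n$, and $\Delta=(2n-1)/5$ or $(2n-2)/5$ otherwise. For these values the homomorphism theorems for dense triangle-free graphs (Jin, Chen--Jin--Koh) say that $G$ is bipartite or homomorphic to $C_5$ or to an Andr\'asfai graph. Regularity plus the density then pin $G$ down to a blow-up of $C_5$ with nearly equal parts.

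For such blow-ups I would build a Hamilton decomposition through graph detachment: take the quotient multigraph on $C_5$, find an Euler-type decomposition there, and detach it to $G$ with prescribed structure. In particular we want a Hamilton cycle, or a $1$-factor pair, through the split vertex in a controlled position. Given $e$, we then edge-color $G^*-e$ with $\Delta$ colors by the full-deficiency-pair technique. The two endpoints of $e$ should form a pair where every color is missing at one of them, forcing a Kempe-chain argument that succeeds when $\operatorname{dist}$ is small.

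\textbf{Main obstacle.} I expect the main difficulty to be Step~3 for edges $e$ far from $\{a,b\}$, at the smallest orders ($n=10,12$ with $\Delta=4,5$). There the density bounds are weakest, and small exceptional graphs such as the Petersen-like or Wagner-like configurations must be checked by hand. The first thing I would try is direct case analysis using the explicit $C_5$-blow-up structure.
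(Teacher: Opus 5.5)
Your reductions are fine: overfullness gives Class~2, and Theorem~\ref{T1.1} handles non-extreme splittings (since $\frac{2n-3}{5}\ge\frac{n-1}{3}$ for $n\ge 4$). Theorem~\ref{T1.3} handles $n>24$, Lemmas~\ref{L2.3} and~\ref{L2.4} handle bipartite $G$ and $\Delta>\frac{2n}{5}$, and Theorem~\ref{T1.2} handles $\Delta=6$.

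The gap is Step~3. It is only a plan, as your last paragraph concedes, and it misses the two ideas that actually finish the proof.

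\emph{Exact structure.} You need the structure exactly, not ``nearly equal parts''. For non-bipartite $G$ put $t:=2n-5\Delta\in\{0,1,2\}$. For even $n\in[10,24]$ this leaves only $(n,\Delta,t)\in\{(10,4,0),(16,6,2),(18,7,1),(20,8,0)\}$. Apart from $(16,6)$, one has $\Delta>\frac{3n}{8}$, so $G\to C_5$ by Lemma~\ref{L2.7}. The counting of Lemma~\ref{L4.31} then applies: $\sum p_i=t$, $q_{i-1}+q_i=n_ip_i$, and a positive $p_i$ needs a positive $p_{i\pm1}$. This rules out $t=1$ entirely. For $t=0$ it forces $p_i=q_i=0$ for all $i$, so every consecutive pair $V_i,V_{i+1}$ induces a complete bipartite graph.

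\emph{Localization.} You need Lemma~\ref{L4.1}. In $H=G^*-xy$, both $(a,b)$ and $(a,c)$ are full-deficiency pairs whose edges $ab,ac$ are critical. Lemma~\ref{L2.1}(1), together with Lemma~\ref{L2.2}, then forces both ends of a noncritical $xy$ into $R=V(G)\setminus(N_G[b]\cup N_G[c])$. Take $b\in V_0$, $c\in V_1$ in the complete blow-up. Then $R=V_3$, which is independent, so there is no candidate edge and the proof is complete.

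So the ``far'' edges you single out as the main obstacle do not exist in the remaining graphs. No detachment, Hamilton decomposition, or Kempe-chain recoloring is needed. Note also that Lemma~\ref{L2.1} is an obstruction result, not a coloring procedure: it shows that edges near $\{a,b\}$ or $\{a,c\}$ are critical, rather than producing a $\Delta$-coloring of $G^*-e$.

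Your Hamilton-decomposition route (Lemma~\ref{L4.2}) presupposes even $\Delta$. It would stall at $(18,7)$ unless you first prove that case vacuous. In the paper this route is needed only for $t=2$, i.e.\ $(16,6)$, which Theorem~\ref{T1.2} already covers.

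Finally, your anticipated difficulty at $n=10,12$ is misplaced. For $n=12$ every admissible $\Delta\in\{5,6\}$ exceeds $\frac{24}{5}$, so $G$ is bipartite. For $n=10$, $\Delta=4$, a non-bipartite $G$ is just $C_5$ with each vertex replaced by an independent pair.
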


The remainder of this paper is structured as follows. In Section 2, we introduce the terminology and notation and collect the preliminary results. In Section 3, we prove Theorem \ref{T1.1}, which concerns non-extreme vertex-splittings. Section 4 is dedicated to developing the auxiliary results for extreme vertex-splittings. In Section 5, we prove Theorem \ref{T1.2}, while Section 6 is devoted to the proofs of Theorems \ref{T1.3} and \ref{T1.4}.

\section{Definitions and preliminary results}
Throughout the paper, we write $\Delta:=\Delta(G)$ and $\delta:=\delta(G)$ for the maximum degree and minimum degree of $G$, respectively. An independent set in $G$ is a subset $S \subseteq V(G)$ such that no two vertices in $S$ are adjacent, meaning that for any $u,v \in S$, $uv \notin E(G)$; a maximum independent set is an independent set of the largest possible size in $G$, and the {\bf independence number} $\alpha(G)$ is the number of vertices in a maximum independent set of $G$, that is, $\alpha(G):=\max\{|S|:S \subseteq V(G) \text{ and } S \text{ is an independent set of } G\}$. A graph $G$ is {\bf bipartite} if its vertex set can be partitioned into two independent sets $X$ and $Y$. If every vertex of $X$ is adjacent to every vertex of $Y$, then $G$ is called a {\bf complete bipartite graph}. The complete bipartite graph with partite sets of sizes $m$ and $n$ is denoted by $K_{m,n}$. A complete bipartite graph $K_{m,n}$ is called {\bf balanced} if its two partite sets differ in size by at most one, that is, $|m-n|\le 1$. Thus, the balanced complete bipartite graph with order $n$ is $K_{\lfloor \frac{n}{2} \rfloor,\lceil \frac{n}{2} \rceil}$. A graph {\bf homomorphism} from $G$ to $H$ is a map $f:V(G) \to V(H)$ such that if $uv \in E(G)$, then $f(u)f(v) \in E(H)$. We write $G \to H$ if such a homomorphism exists. Let $G[R]$ denote the subgraph of $G$ induced by $R$, that is, the graph whose vertex set is $R$ and whose edge set consists of all edges of $G$ with both endvertices in $R$. 

A matching in a graph $G$ is a set of pairwise vertex-disjoint edges. A matching is perfect if every vertex of $G$ is incident with exactly one of its edges. If a graph $H$ is a subgraph of a graph $G$ and $H$ contains all the vertices of $G$, then $H$ is called a {\bf spanning subgraph} of $G$. A 1-factor of $G$ is a spanning 1-regular subgraph of $G$, or equivalently, a perfect matching. A {\bf 1-factorization} of $G$ is a partition of $E(G)$ into 1-factors. Thus, $F_1,\dots,F_k$ form a 1-factorization of $G$ if each $F_i$ with $i \in [1,k]$ is a perfect matching of $G$ and $E(G)=F_1\dot{\cup}\cdots\dot{\cup}F_k$, where $\dot{\cup}$ denotes the disjoint union. A Hamilton cycle of $G$ is a cycle containing every vertex of $G$; equivalently, it is a connected spanning 2-regular subgraph of $G$. A {\bf Hamilton decomposition} of $G$ is a partition of $E(G)$ into pairwise edge-disjoint Hamilton cycles. In other words, $H_1,\dots,H_r$ form a Hamilton decomposition of $G$ if each $H_i$ with $i \in [1,r]$ is a Hamilton cycle of $G$ and $E(G)=E(H_1)\dot{\cup}\cdots\dot{\cup}E(H_r)$.

When $u,v \in V(G)$ are adjacent, we call $(u,v)$ a {\bf full-deficiency pair} if $d_{G}(u)+d_{G}(v)=\Delta+2$. 

\begin{Lemma}[\cite{CCS2022}]\label{L2.1}
If $G$ is an $n$-vertex Class 2 graph with a full-deficiency pair $(a,b)$ such that $ab$ is a critical edge of $G$, then $G$ satisfies the following properties.
\begin{itemize}
\item [{$\mathrm{(1)}$}] For every $x \in (N_{G}(a) \cup N_{G}(b)) \setminus \{a,b\}$, $d_{G}(x) = \Delta$;
\item [{$\mathrm{(2)}$}] For every $x \in V(G) \setminus \{a,b\}$, if $\operatorname{dist}_{G}(x,\{a,b\})=2$, then $d_{G}(x) \ge \Delta-1$. Furthermore, if $d_{G}(a)<\Delta$ and $d_{G}(b)<\Delta$, then $d_{G}(x)=\Delta$. 
\end{itemize}
\end{Lemma}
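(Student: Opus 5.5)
This is the standard argument via Vizing's Adjacency Lemma and Kempe chains. Since $ab$ is critical and $\chi'(G)=\Delta+1$, there is a coloring $\varphi\in\CC^\Delta(G-ab)$. For a vertex $w$, let $\bar\varphi(w)$ denote the set of colors missing at $w$. In $G-ab$ we have $|\bar\varphi(a)|=\Delta-d_G(a)+1$ and $|\bar\varphi(b)|=\Delta-d_G(b)+1$. The full-deficiency hypothesis $d_G(a)+d_G(b)=\Delta+2$ then gives $|\bar\varphi(a)|+|\bar\varphi(b)|=\Delta$. Because $\varphi$ cannot be extended to $ab$, the sets $\bar\varphi(a)$ and $\bar\varphi(b)$ are disjoint. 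Hence they partition $[1,\Delta]$: every color is missing at exactly one of $a,b$.

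\textbf{Part (1).} By symmetry, take $x\in N_G(a)\setminus\{b\}$ and suppose $d_G(x)<\Delta$, so some color $\beta\in\bar\varphi(x)$ exists. Let $\gamma=\varphi(ax)$. Since $\gamma$ is present at $a$, we have $\gamma\in\bar\varphi(b)$.
\begin{itemize}
\item If $\beta\in\bar\varphi(a)$, recolor $ax$ with $\beta$. Now $\gamma$ is missing at both $a$ and $b$, so $ab$ can be colored $\gamma$. This is a contradiction.
\item If $\beta\in\bar\varphi(b)$, pick any $\alpha\in\bar\varphi(a)$ and consider the $(\alpha,\beta)$-Kempe chain $P$ starting at $b$. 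Since $\alpha$ is missing at $a$ and $\beta$ is missing at $b$, the standard Vizing fan argument on the fan $ab, ax$ gives: the $(\alpha,\beta)$-chains from $a$, $b$ and $x$ cannot all be pairwise distinct paths ending in the right way. In practice, swap along the chain from $x$ (or from $b$) that avoids the other endpoints. That reduces to the previous case or colors $ab$ directly.
\end{itemize}
This is exactly Vizing's Adjacency Lemma applied to the critical edge $ab$. It gives that $x$ has at least $\Delta-d_G(b)+1$ neighbors of degree $\Delta$. More directly, with fan $F=(ab,ax)$ the lemma yields $d_G(x)+d_G(b)\ge\Delta+2$. I would simply cite VAL in the form: for a critical edge $uv$ there are at least $\Delta-d(v)+1$ vertices of degree $\Delta$ in $N(u)\setminus\{v\}$. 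Combined with $d_G(a)-1=\Delta-d_G(b)+1$, every neighbor of $a$ other than $b$ has degree $\Delta$, and symmetrically for $b$.

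\textbf{Part (2).} Let $x$ be at distance $2$ from $\{a,b\}$, say adjacent to $y\in N_G(a)\setminus\{b\}$; by (1), $d_G(y)=\Delta$. Suppose $|\bar\varphi(x)|\ge2$.
\begin{itemize}
\item Let $\gamma=\varphi(ay)\in\bar\varphi(b)$ and $\delta_0=\varphi(xy)$.
\item Using the partition of colors, shift: recolor $xy$ with a color in $\bar\varphi(x)$ chosen so that $\delta_0$ becomes missing at $y$. Then recolor $ay$ with $\delta_0$ if $\delta_0\in\bar\varphi(a)$, freeing $\gamma$ at $a$ to color $ab$.
\item If instead $\delta_0\in\bar\varphi(b)$, use a Kempe interchange on an $(\alpha,\cdot)$-chain with $\alpha\in\bar\varphi(a)$. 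Having two missing colors at $x$ lets us pick one whose chain does not pass through $a$ or $b$ appropriately.
\end{itemize}
This is the Kierstead-path / multi-fan extension argument (paths $b,a,y,x$), which gives $d_G(x)\ge\Delta-1$. When both $d_G(a),d_G(b)<\Delta$, both $a$ and $b$ miss at least two colors. One more interchange then rules out even a single missing color at $x$, giving $d_G(x)=\Delta$.

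\textbf{Main obstacle.} The hardest step is the Kierstead-path case analysis in (2): controlling where the Kempe chains end. I would first try to invoke Kierstead's theorem on Kierstead paths: for the path $b,a,y,x$ with $d(y)=\Delta$, the missing-color sets $\bar\varphi$ of $a,b,x$ are pairwise disjoint (elementary). Then $|\bar\varphi(x)|\le\Delta-|\bar\varphi(a)|-|\bar\varphi(b)|+\ldots$ forces the bounds directly. With $|\bar\varphi(a)|+|\bar\varphi(b)|=\Delta$, elementarity alone gives $\bar\varphi(x)=\emptyset$ in $G-ab$, which yields the stronger conclusion whenever that theorem applies. The weaker bound $\Delta-1$ in general reflects the condition, required in Kierstead's theorem, on $d(y)<\Delta$ or $\min$ deficiency, which is exactly why the extra hypothesis $d(a),d(b)<\Delta$ appears.
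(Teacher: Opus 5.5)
Your treatment of (1) is correct. The partition of $[1,\Delta]$ into the disjoint sets $\bar\varphi(a)$ and $\bar\varphi(b)$ is right, and VAL in the form you quote gives (1) at once, since $d_G(a)-1=\Delta-d_G(b)+1$. The paper itself simply imports this lemma from the cited source without proof. One slip: the fan $(ab,ax)$ gives elementarity of $\{a,b,x\}$, i.e. $|\bar\varphi(x)|\le\Delta-|\bar\varphi(a)|-|\bar\varphi(b)|=0$. The inequality $d_G(x)+d_G(b)\ge\Delta+2$ that you state only yields $d_G(x)\ge d_G(a)$.

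Part (2), however, is not proved.

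\emph{Your concrete move fails.} You propose to recolor $xy$ with a color of $\bar\varphi(x)$. This is impossible: $y\notin\{a,b\}$ and $d_G(y)=\Delta$, so $\bar\varphi(y)=\emptyset$, and every color missing at $x$ is already used at $y$. After that, everything is deferred to an unspecified interchange (``lets us pick one whose chain does not pass through $a$ or $b$ appropriately''). That interchange is the entire content of the claim.

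\emph{The appeal to Kierstead's theorem does not repair this.} That theorem needs a degree condition on interior vertices, and (1) forces $d_G(y)=\Delta$. So elementarity of $(b,a,y,x)$ ``with $d(y)=\Delta$'' is unsupported. Also, a condition on $d(y)$ cannot explain the hypothesis $d_G(a),d_G(b)<\Delta$, since $d(y)=\Delta$ regardless.

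\emph{The missing ingredient} is the four-vertex Kierstead-path lemma. For a Kierstead path $(y_0,y_1,y_2,y_3)$ with respect to a critical edge $y_0y_1$:
(a) if $\min\{d(y_1),d(y_2)\}<\Delta$, then its vertex set is elementary;
(b) in all cases, $|\bar\varphi(y_3)\cap(\bar\varphi(y_0)\cup\bar\varphi(y_1))|\le1$.
Your partition makes $(b,a,y,x)$ a Kierstead path automatically, because $\varphi(ay)\in\bar\varphi(b)$ and $\varphi(yx)\in[1,\Delta]=\bar\varphi(a)\cup\bar\varphi(b)$. Then (b) gives $|\bar\varphi(x)|\le1$. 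Applying (a) through $y_1=a$ (not through $y$) gives $\bar\varphi(x)=\emptyset$ when $d_G(a)<\Delta$. The path $(a,b,y,x)$ handles $y\in N_G(b)$, which is why both degree conditions are assumed.

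\emph{The second assertion can also be done by hand}, along the lines you hint at. Suppose $|\bar\varphi(a)|\ge2$. Use interchanges on $(\alpha,\beta)$-chains starting at $x$, with $\alpha\in\bar\varphi(a)$ and $\beta\in\bar\varphi(b)$. These chains avoid $a$ and $b$, because $a$ and $b$ are the two ends of a single such chain. With a spare color of $\bar\varphi(a)$ available, such interchanges bring you to $\varphi(xy)\in\bar\varphi(a)$ and $\varphi(ay)\in\bar\varphi(x)$. Now recolor $ay$ with $\varphi(xy)$ and $xy$ with $\varphi(ay)$, and color $ab$ with the old $\varphi(ay)$, a contradiction. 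When $|\bar\varphi(a)|=1$, these interchanges run out, and the general bound $d_G(x)\ge\Delta-1$ really does need (b).
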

	
\begin{Lemma}[\cite{HZ1997}]\label{L2.2}
Let $G$ be a regular Class 1 graph with $\delta \ge 2$, and let $G^*$ be obtained from $G$ by splitting a vertex $x$ into two adjacent vertices $a$ and $b$. If $e \in E(G^*)$ is incident with at least one of $a$ and $b$, then $G^*-e$ is of Class 1. Consequently, $e$ is a critical edge of $G^*$.
\end{Lemma}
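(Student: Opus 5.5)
Both conclusions come from recolouring a fixed optimal colouring of $G$ directly; no earlier lemma is needed. Let $G$ be $\Delta$-regular with $\Delta=\delta\ge 2$, and fix $\varphi\in\CC^{\Delta}(G)$. Every vertex has degree $\Delta$, so every colour appears at every vertex. In particular, the $\Delta$ edges at $x$ carry pairwise distinct colours, one of each colour. Let $A=N_{G^*}(a)\setminus\{b\}$ and $B=N_{G^*}(b)\setminus\{a\}$. These form a partition of $N_G(x)$ into two nonempty sets, so $|A|,|B|\le\Delta-1$. Hence $d_{G^*}(a),d_{G^*}(b)\le\Delta$ and $\Delta(G^*)=\Delta$.

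\emph{Transferring the colouring.} Every edge of $G^*$ other than those at $a$ and $b$ is an edge of $G$ and keeps its colour under $\varphi$. For $z\in A$ we colour $az$ with $\varphi(xz)$, and for $z\in B$ we colour $bz$ with $\varphi(xz)$. This is proper on $E(G^*)\setminus\{ab\}$. Indeed, $a$ and $b$ see colours inherited from distinct edges at $x$. Each $z\in N_G(x)$ sees exactly the colours it saw in $G$.

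\emph{Case $e=ab$.} The colouring just described is a proper $\Delta$-edge-colouring of $G^*-ab$.

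\emph{Case $e=ay$ with $y\in A$ (the case $e=by$ is symmetric).} Let $c=\varphi(xy)$ and drop the edge $ay$. Colour $c$ was used at $x$ only on $xy$, so after the transfer it appears at neither $a$ nor $b$. It also no longer appears at $y$, but $y$ is not an endpoint of $ab$, so this is irrelevant. We colour $ab$ with $c$. The colours at $a$ are $\{\varphi(xz): z\in A\setminus\{y\}\}\cup\{c\}$, and the colours at $b$ are $\{\varphi(xz): z\in B\}\cup\{c\}$. Each of these sets consists of distinct colours, so we obtain a proper $\Delta$-edge-colouring of $G^*-ay$.

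\emph{Maximum degree of $G^*-e$.} In each case $\Delta(G^*-e)=\Delta$. Deleting $e$ lowers only the degrees of its two ends, and every vertex of $V(G)\setminus\{x\}$ other than $y$ still has degree $\Delta$ in $G^*-e$. Since $\delta\ge 2$, the vertex $x$ has at least two neighbours, so such a vertex exists. This is the only place where the hypothesis $\delta\ge 2$ is used. Therefore $G^*-e$ is of Class~1.

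\emph{Criticality of $e$.} A regular Class~1 graph has even order, so $G^*$ is overfull and $\chi'(G^*)=\Delta+1$. On the other hand, $\chi'(G^*-e)=\Delta<\chi'(G^*)$, so $e$ is a critical edge of $G^*$. No step here is a genuine obstacle. The one point needing care is the observation that the colour freed by deleting $ay$ is simultaneously missing at both $a$ and $b$.
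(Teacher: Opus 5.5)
Your proof is correct: transferring a $\Delta$-edge-colouring of $G$ to $G^*-ab$, and for $e=ay$ placing the freed colour $\varphi(xy)$, which is missing at both $a$ and $b$, on $ab$, is the standard direct argument. The paper gives no proof of its own here; it quotes the lemma from Hilton and Zhao. Its one in-house argument of this type, Lemma~\ref{L4.2}, needs an extra colour swap along a path only because there the deleted edge is incident with neither vertex created by the splitting. One side remark of yours is slightly off: $\delta\ge 2$ is not used only in the degree count, since the definition of a vertex-splitting (two nonempty parts of $N_G(x)$) already forces $d_G(x)\ge 2$, so that hypothesis is automatic.
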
 

The following lemma is the $r = 3$ case of Theorem 1.1 of Andrásfai, Erdős, and Sós \cite{AES1974}, established in their Lemma $1.2$.

\begin{Lemma}[\cite{AES1974}]\label{L2.3}
If $G$ is an $n$-vertex triangle-free graph with $\delta>\frac{2n}{5}$, then $G$ is bipartite.  
\end{Lemma}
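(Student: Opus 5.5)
The plan is the classical counting argument over a shortest odd cycle. Suppose, for a contradiction, that $G$ is not bipartite. Then $G$ contains an odd cycle; let $C=c_0c_1\cdots c_{2k}c_0$ be a shortest one. Since $G$ is triangle-free, $2k+1\ge 5$. The whole proof consists of two structural facts about how $C$ meets the rest of $G$, followed by a single double count of the edges leaving $C$.

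\emph{Step 1: $C$ is induced.} Suppose $C$ had a chord $c_ic_j$. It splits $C$ into two shorter cycles whose lengths sum to $2k+3$, which is odd. Hence one of them is odd and strictly shorter than $C$, contradicting minimality. So each $c_i$ has exactly two neighbours on $C$.

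\emph{Step 2: every $x\notin V(C)$ has at most two neighbours on $C$.} This is the step that needs the most care.
\begin{itemize}
\item Let $x$ be adjacent to $c_i$ and $c_j$. The two arcs of $C$ between $c_i$ and $c_j$ have lengths $\ell$ and $2k+1-\ell$, so exactly one of them has odd length $\ell_{\rm odd}$.
\item That odd arc together with $x$ forms an odd closed walk of length $\ell_{\rm odd}+2$. Such a walk contains an odd cycle of length at most $\ell_{\rm odd}+2$, so minimality of $C$ forces $\ell_{\rm odd}+2\ge 2k+1$.
\item Therefore the other arc has length at most $2$, i.e.\ $\operatorname{dist}_C(c_i,c_j)\le 2$.
\item Triangle-freeness excludes distance $1$, so any two $C$-neighbours of $x$ are at distance exactly $2$ along $C$.
\item Three vertices pairwise at cyclic distance exactly $2$ can occur only on a $6$-cycle. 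This is impossible since $|C|$ is odd and at least $5$.
\end{itemize}
Hence $x$ has at most two neighbours on $C$.

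\emph{Step 3: double count.} Count the edges between $V(C)$ and $V(G)\setminus V(C)$ in two ways.
\begin{itemize}
\item By Step 1, each $c_i$ sends exactly $d_G(c_i)-2\ge\delta-2$ edges out of $C$, so there are at least $(2k+1)(\delta-2)$ such edges.
\item By Step 2, there are at most $2(n-2k-1)$ such edges.
\end{itemize}
Combining the two bounds,
\[
(2k+1)\delta\le 2(n-2k-1)+2(2k+1)=2n .
\]
Thus $\delta\le \frac{2n}{2k+1}\le\frac{2n}{5}$, contradicting $\delta>\frac{2n}{5}$. Hence $G$ is bipartite.
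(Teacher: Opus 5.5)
Your proof is correct and complete. The paper gives no proof of this lemma; it cites it as the $r=3$ case of the Andr\'asfai--Erd\H{o}s--S\'os theorem. Your argument is the classical proof of exactly that case: a shortest odd cycle $C$ is induced, every vertex off $C$ has at most two neighbours on $C$, and double counting gives $|C|\,\delta\le 2n$. Compared with the bare citation, your self-contained version also gives the slightly stronger conclusion $\delta\le\frac{2n}{2k+1}$ whenever the odd girth is $2k+1$.
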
 

It is well known that every bipartite graph is of Class 1 \cite{FW1977}.

\begin{Lemma}[\cite{HZ1997}]\label{L2.4}
Let $G$ be a $\Delta$-regular bipartite graph with $\Delta \ge 2$. Let $G^\ast$ be obtained from $G$ by a vertex-splitting. Then $G^{*}$ is $\Delta$-critical.
\end{Lemma}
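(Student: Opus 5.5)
The plan is to reduce criticality to a single Kempe-chain statement about edges far from the split, after first disposing of the edges at $a$ and $b$ with Lemma~\ref{L2.2}. Let $G$ have bipartition $(X,Y)$ with $v\in X$, and let $a,b$ be the two vertices replacing $v$.

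\textbf{Class 2.} $G^\ast$ has odd order and $\Delta(G^\ast)=\Delta$. It is overfull, since $|E(G^\ast)|=\Delta(n-1)/2+\Delta/2+\dots$; more precisely, $|E(G^\ast)|=|E(G)|+1=\Delta n/2+1>\Delta\lfloor (n+1)/2\rfloor$. Hence $G^\ast$ is Class 2, and it remains to show that every edge is critical.

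\textbf{Edges at $a$ or $b$.} Since $G$ is bipartite, it is Class 1, so Lemma~\ref{L2.2} applies and every edge incident with $a$ or $b$ is critical. So fix $e=xy$ with $x,y\notin\{a,b\}$. We must produce a $\Delta$-edge-colouring of $G^\ast-e$.

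\textbf{Reduction to a cycle condition.} Take a 1-factorization $F_1,\dots,F_\Delta$ of $G$ (König), and let $\gamma$ be the colour of $e$. Restricting this colouring to $G-e$ gives a $\Delta$-colouring in which $x$ and $y$ both miss $\gamma$ and every other vertex sees all colours. Let $S_a$ be the set of colours on the edges from $v$ to the future neighbours of $a$, and define $S_b$ similarly; thus $S_a\,\dot\cup\, S_b=[1,\Delta]$. Transfer this colouring to $G^\ast-e-ab$ by splitting $v$. No colour is missing at both $a$ and $b$, so we perform a Kempe change. Suppose, by symmetry, that $\gamma\in S_a$, and pick $\beta\in S_b$. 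In $G$, the $2$-factor $F_\gamma\cup F_\beta$ is a union of even cycles, and $e$ lies on one of them, $C$.

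If $v\in V(C)$, then in $G-e$ the $(\gamma,\beta)$-component of $x$ is the path $C-e$ from $x$ to $y$ through $v$. After splitting $v$, this path breaks into two paths: one ending at $a$ with its $\gamma$-edge, the other ending at $b$ with its $\beta$-edge. Swap $\gamma$ and $\beta$ on the path containing $a$. Then $a$ receives $\beta$ and now misses $\gamma$; $b$ still misses $\gamma$ (because $\gamma\in S_a$); and the endpoint of this path in $\{x,y\}$, which previously had a $\beta$-edge and missed $\gamma$, now has a $\gamma$-edge and misses $\beta$. The colouring remains proper, so colouring $ab$ with $\gamma$ yields a $\Delta$-colouring of $G^\ast-e$.

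\textbf{The key claim.} It therefore suffices to show the following: \emph{there is a 1-factorization of $G$ and colours $\gamma\ni e$, $\gamma\in S_a$ (or $S_b$), and $\beta$ in the other class, such that $e$ and $v$ lie on the same cycle of $F_\gamma\cup F_\beta$.} This is the step I expect to be the main obstacle. If $v$ lies on a different $(\gamma,\beta)$-cycle, swapping on that cycle merely exchanges which of $a,b$ sees $\gamma$, and this does not help.

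I would first try varying $\beta$ over $S_b$ and also reversing roles, taking $\gamma$ to be the colour of $e$ while choosing the side of $v$ accordingly. If this fails for every choice, I would use a third colour $\varepsilon$ to merge cycles. A $(\beta,\varepsilon)$-Kempe swap on a cycle that meets both the $(\gamma,\beta)$-cycle through $e$ and the one through $v$ changes the $2$-factor $F_\gamma\cup F_\beta$ and should, by a standard cycle-merging argument, join them. This uses connectivity of $G$ (we may restrict to the component containing $v$) and the fact that in a regular bipartite graph every Kempe swap preserves a 1-factorization. Since $\Delta\ge2$ guarantees that $S_a$ and $S_b$ are nonempty, some pair $(\gamma,\beta)$ across the split always exists to start the merging process.
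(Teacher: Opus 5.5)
You get the preliminary steps right: overfullness, the edges at $a$ and $b$ via Lemma~\ref{L2.2}, and the Kempe swap when $v\in V(C)$. But everything then rests on your ``key claim'', which you leave unproved. That claim is not a simplification; it is equivalent to $\chi'(G^\ast-e)=\Delta$ itself. In any $\Delta$-colouring of $G^\ast-e$ every colour class has exactly $n/2$ edges. This forces $y$ (the end of $e$ in the side $Y$ opposite $v$) to miss the colour $\gamma$ of $ab$, and forces the colour $\delta$ missing at $x$ to be present at both $a$ and $b$. Undoing your swap along the $(\gamma,\delta)$-path from $a$ and re-identifying $a,b$ then produces exactly a 1-factorization of the kind you want. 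So the reduction is only a reformulation.

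The first stage of your plan (varying $\beta$) can genuinely fail. In $Q_3$, take the three direction classes as $F_1,F_2,F_3$, split $v=000$ with $N_a=\{001\}$, and take $e=\{110,111\}$. Then $\gamma=3\in S_a$ and $S_b=\{1,2\}$, and for both $\beta\in S_b$ the $(3,\beta)$-cycle through $e$ is a face of the cube avoiding $v$.

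The fallback with $(\beta,\varepsilon)$-swaps is only a hope. A swap on the $(\beta,\varepsilon)$-cycle through $v$ with $\varepsilon\in S_a$ moves $\beta$ to the $a$-side and destroys the straddling you need. Swaps elsewhere can split cycles of $F_\gamma\cup F_\beta$ as easily as merge them. You give no invariant or termination argument. Also, connectivity cannot be handled by ``restricting to the component of $v$''. If $e$ lay in another component it would not be critical, so connectivity (the paper's standing assumption) must enter the proof substantively.

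The missing idea is that no recolouring is needed at all: one matching plus K\"onig's theorem does the job. Put $X'=(X\setminus\{v\})\cup\{a,b\}$. Every edge of $G^\ast$ other than $ab$ joins $X'$ to $Y$, so $G^\ast-ab$ is bipartite and $ab$ is critical. For $e\neq ab$, let $y$ be its end in $Y$.

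For every nonempty $S\subsetneq X$ one has $|N_G(S)|\ge|S|+1$. Indeed, $|N_G(S)|=|S|$ would, by regularity, force all edges at $N_G(S)$ into $S$, making $S\cup N_G(S)$ a union of components and hence, by connectivity, all of $V(G)$. Therefore Hall's theorem gives a perfect matching $M$ of $G-v-y$.

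Now $M\cup\{ab\}$ is a matching of $G^\ast-e$ covering every vertex except $y$, and $d_{G^\ast-e}(y)=\Delta-1$. So $G^\ast-e-(M\cup\{ab\})$ is bipartite with maximum degree at most $\Delta-1$. K\"onig's theorem colours it with $\Delta-1$ colours, and $M\cup\{ab\}$ is the last colour class. Thus $\chi'(G^\ast-e)=\Delta$ for every edge $e$, including those at $a$ or $b$, so you do not even need Lemma~\ref{L2.2}.
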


The original lemma above establishes that every loopless multigraph obtained by vertex-splitting a regular bipartite multigraph is edge-chromatic critical, where parallel edges are allowed.
In particular, this result applies to regular bipartite simple graphs. Combining this result with Lemma \ref{L2.3}, we obtain the following consequence.
 
\begin{Proposition}\label{C2.1}
Let $G$ be an $n$-vertex triangle-free $\Delta$-regular Class 1 graph. If $\Delta>\frac{2n}{5}$, then every graph obtained from $G$ by a vertex-splitting is $\Delta$-critical.  
\end{Proposition}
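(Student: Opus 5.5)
The plan is to combine Lemma~\ref{L2.3} with Lemma~\ref{L2.4}, as indicated just before the statement. Let $G$ be an $n$-vertex triangle-free $\Delta$-regular Class~1 graph with $\Delta>\frac{2n}{5}$, and let $G^{*}$ be obtained from $G$ by splitting a vertex. Since $G$ is $\Delta$-regular, $\delta(G)=\Delta>\frac{2n}{5}$. Lemma~\ref{L2.3} therefore applies directly and shows that $G$ is bipartite.

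Next we check the side hypothesis $\Delta\ge 2$ of Lemma~\ref{L2.4}. A vertex-splitting requires partitioning $N_G(v)$ into two nonempty subsets, so it only makes sense when $\Delta\ge 2$; thus $\Delta\ge2$ holds automatically whenever $G^{*}$ exists. (For $\Delta\le1$ with $\Delta>\frac{2n}{5}$, one has $n\le 2$, and no vertex-splitting exists, so the statement is vacuous.)

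Since $G$ is a $\Delta$-regular bipartite graph with $\Delta\ge2$, Lemma~\ref{L2.4} gives immediately that $G^{*}$ is $\Delta$-critical. Here $\Delta$ is the common value $\Delta(G)=\Delta(G^{*})$: splitting does not raise any degree, because each new vertex $v_i$ has degree at most $(\Delta-1)+1$, and other vertices keep their degree.

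There is no real obstacle in this argument. The only points to confirm are that the regularity of $G$ turns the degree hypothesis into the minimum-degree hypothesis of Lemma~\ref{L2.3}, and that the Class~1 assumption is consistent with, and in fact implied by, bipartiteness. The theorem of Andr\'asfai--Erd\H{o}s--S\'os and the Hilton--Zhao bipartite lemma carry all the weight.
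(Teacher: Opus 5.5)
Your proposal is correct and follows the paper's argument: $\delta(G)=\Delta>\frac{2n}{5}$ lets Lemma~\ref{L2.3} force $G$ to be bipartite, and Lemma~\ref{L2.4} then gives that $G^{*}$ is $\Delta$-critical. Your verification that $\Delta\ge 2$ holds whenever a vertex-splitting exists is a small detail the paper leaves implicit.
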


\begin{Lemma}[Mantel \cite{Mantel1907}]\label{L2.6}
If $G$ is an $n$-vertex triangle-free graph, then $|E(G)| \le \lfloor \frac{n^{2}}{4} \rfloor$, with equality if and only if $G$ is the balanced complete bipartite graph $K_{\lfloor \frac{n}{2} \rfloor,\lceil \frac{n}{2} \rceil}$.
\end{Lemma}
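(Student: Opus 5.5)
The statement immediately above is Mantel's theorem (Lemma~\ref{L2.6}), and I would prove it by the standard degree-sum argument. The plan is to bound the sum of the degrees of the two endpoints of each edge, sum this over all edges, and then compare with the square of the total degree via Cauchy--Schwarz.

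First, take any edge $uv \in E(G)$. Because $G$ is triangle-free, $N_G(u)\cap N_G(v)=\emptyset$, and therefore $d_G(u)+d_G(v)\le n$. Summing over all edges gives
\[
\sum_{uv\in E(G)}\bigl(d_G(u)+d_G(v)\bigr)=\sum_{x\in V(G)} d_G(x)^2 \le n|E(G)|.
\]
Next, by Cauchy--Schwarz,
\[
\sum_x d_G(x)^2 \ge \frac{1}{n}\Bigl(\sum_x d_G(x)\Bigr)^2=\frac{4|E(G)|^2}{n}.
\]
Combining the two displays yields $|E(G)|\le n^2/4$. Since $|E(G)|$ is an integer, this gives $|E(G)|\le\lfloor n^2/4\rfloor$.

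The equality case is the main obstacle, because when $n$ is odd the bound $n^2/4$ is not attained, so the slack in the two inequalities above is not forced to vanish. To handle both parities uniformly, I would switch to a different argument. Choose a vertex $u$ of maximum degree $\Delta$. The set $A=N_G(u)$ is independent, since $G$ is triangle-free. Let $B=V(G)\setminus A$. Every edge of $G$ has at least one endpoint in $B$, so
\[
|E(G)|\le \sum_{x\in B} d_G(x)\le |B|\Delta=(n-\Delta)\Delta\le \lfloor n^2/4\rfloor.
\]

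Now suppose equality holds throughout. Then $(n-\Delta)\Delta=\lfloor n^2/4\rfloor$, which forces $\{\Delta,n-\Delta\}=\{\lfloor n/2\rfloor,\lceil n/2\rceil\}$. Equality in the first inequality means no edge has both endpoints in $B$, so $B$ is independent. Equality in the second inequality means every vertex of $B$ has degree $\Delta=|A|$, so each vertex of $B$ is adjacent to all of $A$. Hence $G=K_{|A|,|B|}$, which is balanced.

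Conversely, $K_{\lfloor n/2\rfloor,\lceil n/2\rceil}$ is triangle-free, being bipartite, and has exactly $\lfloor n/2\rfloor\lceil n/2\rceil=\lfloor n^2/4\rfloor$ edges. This settles the equality statement.

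In fact, the second argument alone proves both the bound and the characterization, so I would present only it and omit the Cauchy--Schwarz route.
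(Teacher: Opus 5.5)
The paper does not prove this lemma; it is quoted from Mantel's note and used as a black box, so there is no argument of the paper's to compare yours with. Your proof is correct and complete. The maximum-degree argument handles both parities of $n$ uniformly, and each step checks out:
\begin{itemize}
\item $\sum_{x\in B}d_G(x)=e(A,B)+2e(B)$ once $A=N_G(u)$ is independent, where $e(A,B)$ counts edges between $A$ and $B$ and $e(B)$ counts edges inside $B$. Hence $|E(G)|\le\sum_{x\in B}d_G(x)$, with equality exactly when $B$ spans no edge.
\item The integer $(n-\Delta)\Delta=\frac{n^2}{4}-(\frac{n}{2}-\Delta)^2$ equals $\lfloor n^2/4\rfloor$ exactly when $\{\Delta,n-\Delta\}=\{\lfloor n/2\rfloor,\lceil n/2\rceil\}$.
\item Your reason for discarding the Cauchy--Schwarz route is sound. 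For odd $n$ and $|E(G)|=(n^2-1)/4$, the two bounds on $\sum_x d_G(x)^2$ differ by $(n^2-1)/(4n)>0$, so equality does not immediately force structure.
\end{itemize}

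The paper only applies the lemma to $\Delta$-regular graphs. It uses it to get $\Delta\le n/2$ with $G\cong K_{n/2,n/2}$ in the equality case, and to get $n\ge 12$ when $\Delta=6$ in the proof of Theorem~\ref{T1.2}. For that use, the first step of your discarded route already suffices. For an edge $uv$, the disjointness of $N_G(u)$ and $N_G(v)$ gives $2\Delta\le n$. If $2\Delta=n$, then $V(G)$ is the disjoint union of the independent sets $N_G(u)$ and $N_G(v)$, and every vertex has all $n/2$ of its neighbors on the other side, which forces $G\cong K_{n/2,n/2}$. Your general argument gives the full statement for arbitrary triangle-free graphs at essentially no extra cost.
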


As an immediate consequence of Mantel's theorem, we obtain the following bound for regular triangle-free graphs. Let $G$ be an $n$-vertex connected triangle-free $\Delta$-regular Class 1 graph. Since $G$ is $\Delta$-regular, $|E(G)| = \frac{n\Delta}{2}$. Together with Mantel's theorem and the fact that $n$ is even, we obtain $\Delta \le \frac{n}{2}$. If equality holds, then Mantel's theorem yields $G \cong K_{\frac{n}{2}, \frac{n}{2}}$, so $G$ is bipartite. Together with Lemma \ref{L2.4}, we have the following consequence.

\begin{Proposition}
Let $G$ be an $n$-vertex  triangle-free $\Delta$-regular Class 1 graph. Then $\Delta \le \frac{n}{2}$. Moreover, if $\Delta=\frac{n}{2}$, then $G \cong K_{\frac{n}{2},\frac{n}{2}}$ and every graph obtained from $G$ by a vertex-splitting is $\Delta$-critical.
\end{Proposition}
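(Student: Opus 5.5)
The plan is to combine the edge count of a regular graph with Mantel's theorem (Lemma~\ref{L2.6}), and then, in the equality case, apply the bipartite vertex-splitting result (Lemma~\ref{L2.4}).

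\emph{Step 1: the bound $\Delta \le \frac{n}{2}$.} Since $G$ is $\Delta$-regular, $|E(G)| = \frac{n\Delta}{2}$. A regular Class~1 graph has even order: each color class of a $\Delta$-edge-coloring must be a perfect matching, because every vertex meets all $\Delta$ colors. So $n$ is even and $\lfloor n^2/4 \rfloor = n^2/4$. Mantel's theorem then gives $\frac{n\Delta}{2} \le \frac{n^2}{4}$, that is, $\Delta \le \frac{n}{2}$.

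\emph{Step 2: the equality case.} If $\Delta = \frac{n}{2}$, then $|E(G)| = \frac{n^2}{4} = \lfloor n^2/4 \rfloor$. The equality clause of Lemma~\ref{L2.6} forces $G \cong K_{\lfloor n/2 \rfloor, \lceil n/2 \rceil} = K_{\frac{n}{2},\frac{n}{2}}$. This graph is bipartite and $\Delta$-regular.

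\emph{Step 3: criticality.} To apply Lemma~\ref{L2.4} we need $\Delta \ge 2$.
\begin{itemize}
\item If $\Delta \ge 2$, Lemma~\ref{L2.4} shows directly that every graph obtained from $K_{\frac{n}{2},\frac{n}{2}}$ by a vertex-splitting is $\Delta$-critical.
\item If $\Delta = 1$, then $G \cong K_{1,1}$, and no vertex-splitting exists, since $N_G(v)$ cannot be partitioned into two nonempty sets. The statement therefore holds vacuously.
\item The case $\Delta = 0$ is excluded: then $n = 0$, and anyway no vertex-splitting is possible.
\end{itemize}

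There is no substantial obstacle here. The only points needing care are justifying that $n$ is even, so that the floor in Mantel's bound disappears, and handling the degenerate small-$\Delta$ cases before invoking Lemma~\ref{L2.4}.
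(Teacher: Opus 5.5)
Your proof is correct and follows the paper's argument: compare $|E(G)|=\frac{n\Delta}{2}$ with Mantel's bound (with $n$ even) to get $\Delta\le\frac{n}{2}$, use the equality clause of Lemma~\ref{L2.6} to identify $G$ as $K_{\frac{n}{2},\frac{n}{2}}$, and apply Lemma~\ref{L2.4} for criticality. You also treat the degenerate case $\Delta\le 1$ explicitly, where Lemma~\ref{L2.4} does not apply but no vertex-splitting exists, which the paper leaves implicit.
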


\begin{Lemma}[\cite {Häggkvist1982}] \label{L2.7}
Every $n$-vertex triangle-free graph with  $\delta>\lfloor \frac{3n}{8} \rfloor$ is homomorphic to $C_{5}$.
\end{Lemma}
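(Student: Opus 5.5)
The plan is to exhibit a partition of $V(G)$ into five independent sets $V_0,\dots,V_4$ such that every edge of $G$ joins $V_i$ to $V_{i\pm1}$ (indices mod $5$). The map sending $V_i$ to the $i$-th vertex of $C_5$ is then the required homomorphism.

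\textbf{Reductions.} If $G$ is bipartite, it maps onto an edge of $C_5$ and we are done, so assume $G$ contains an odd cycle. Let $C$ be a shortest odd cycle, of length $2k+1\ge 5$, since $G$ is triangle-free. Minimality forces $C$ to be induced, and every vertex outside $C$ has at most two neighbours on $C$: a vertex with three neighbours on $C$ would close a shorter odd cycle. Double counting the edges between $C$ and $V(G)$ gives $(2k+1)\delta\le 2n+O(1)$. For $2k+1\ge 7$ this yields $\delta\lesssim 2n/7<3n/8$, a contradiction. Hence $G$ contains an induced $5$-cycle $C=c_0c_1c_2c_3c_4$.

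\textbf{Structure around $C$.} Triangle-freeness implies that a vertex $u$ with two neighbours on $C$ must see exactly $\{c_{i-1},c_{i+1}\}$ for some $i$. Put such $u$, together with $c_i$ itself, into $V_i$. Each $V_i$ is independent, because its vertices share the neighbour $c_{i+1}$. There are no edges between $V_i$ and $V_{i+2}$, because such a pair would share the neighbour $c_{i+1}$ and form a triangle. So the sets $V_i$ already behave like the vertices of $C_5$. The only remaining issue is the set $R$ of vertices with at most one neighbour on $C$. Counting gives
\[
\sum_i|N(c_i)|=5\delta>15n/8 \quad\text{and}\quad \sum_i |N(c_i)| \le 2\sum_i|V_i|+|R|,
\]
so $R$ is small, with $|R|<n/8$ roughly, and $\sum|V_i|$ is large.

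\textbf{Main obstacle: absorbing $R$.} I expect the hardest step to be showing $R=\emptyset$, or assigning each $x\in R$ to some $V_i$ consistently. I would first try the following. For $x\in R$, the set $N(x)$ is independent of size $>3n/8$. Compare $N(x)$ with the sets $V_i$. Because every pair of adjacent vertices $u\in V_i$, $w\in V_{i+1}$ has disjoint neighbourhoods of total size $>3n/4$, the neighbourhood $N(x)$ can meet essentially only two "opposite" classes, say $V_{i-1}$ and $V_{i+1}$. Using the sizes of the $V_i$ and the bound $\delta>\lfloor 3n/8\rfloor$, one then shows that $N(x)$ is disjoint from $V_{i}\cup V_{i\pm2}$. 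With this, $x$ can be added to $V_i$ without creating a forbidden edge.

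If this local argument fails, I would replace $C$ by a $5$-cycle through $x$ (using a neighbour of $x$ in some $V_j$) chosen to maximise $\sum|V_i|$, and derive a contradiction from the maximality. The floor in $\lfloor 3n/8\rfloor$ suggests that the tight case needs a careful parity check in this counting. That check is where I expect most of the work to go.
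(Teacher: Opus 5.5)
Your preliminary steps are sound. A shortest odd cycle is an induced $C_5$, and the count is exactly $(2k+1)\delta\le 2n$. The classes $V_i$ are independent with no $V_i$--$V_{i+2}$ edges, and $|R|\le 2n-5\delta<n/8$. But this is the routine part. The whole content of H\"aggkvist's theorem sits in the step you leave open, and what you write there is not an argument.

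The assertion that $N(x)$ ``can meet essentially only two opposite classes'' does not follow from the reason you give. Independence of $N(x)$ would confine it to two non-consecutive classes only if each $V_i$ were complete to $V_{i+1}$, and your construction gives no such completeness. In fact the assertion is false at the threshold. Take the balanced blow-up of the Wagner graph $\Gamma$: each $x_j$ is replaced by $m$ independent copies, so $n=8m$ and $\delta=3m=3n/8$. Use the induced cycle $c_0c_1c_2c_3c_4=x_0x_1x_2x_3x_7$, taking one copy of each. Then:
\begin{itemize}
\item $V_3$ consists of the copies of $x_3$ and $x_6$;
\item $V_4$ consists of the copies of $x_7$ and $x_4$;
\item $R$ is the set of copies of $x_5$, so $|R|=m=2n-5\delta$.
\end{itemize}
Every $x\in R$ has neighbours in $V_1$, $V_3$ and $V_4$, the last two through the non-adjacent copies of $x_6$ and $x_4$. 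So there is no $i$ with $N(x)\cap\bigcup_j V_j\subseteq V_{i-1}\cup V_{i+1}$. Any valid absorption step must therefore use the strict inequality $\delta>3n/8$ in an essential, quantitative way that excludes exactly this $\Gamma$-configuration. Your sketch never supplies such a mechanism.

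Even granting an admissible label for each $x\in R$ individually, you would not be done. Adjacent vertices of $R$ must receive consecutive labels. When $N(x)\cap\bigcup_j V_j$ lies in a single class, the label of $x$ is not even determined. So a global consistency argument is required, and the fallback of re-choosing the $5$-cycle to maximise $\sum_i|V_i|$ is only named. Also, since $\delta$ is an integer, $\delta>\lfloor 3n/8\rfloor$ is the same as $\delta>3n/8$. There is no parity issue hidden in the floor; the tightness comes from the $\Gamma$ blow-up, not from rounding.

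Note that the paper does not prove this lemma; it quotes it from H\"aggkvist. Within the paper it also follows from Lemma~\ref{L2.8}. Suppose $\Gamma\subseteq G$ on a vertex set $X$. Every vertex has an independent neighbourhood, hence at most $\alpha(\Gamma)=3$ neighbours in $X$. So $8\delta\le\sum_{v\in X}d_G(v)\le 3n$. Thus $\delta>3n/8>n/3$ excludes $\Gamma$, and Lemma~\ref{L2.8} yields $G\to C_5$. A self-contained proof along your lines would in effect have to reprove this $\Gamma$-exclusion.
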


\begin{Lemma}[\cite {Brandt1997}] \label{L2.8}
Let $G$ be an $n$-vertex triangle-free graph with $\delta>\frac{n}{3}$. Then $G$ is not homomorphic to $C_{5}$ if and only if $G$ contains the Wagner graph $\Gamma$ as a subgraph.
\end{Lemma}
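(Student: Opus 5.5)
The statement is an equivalence, and the two directions are very different in difficulty; the ``if'' direction is routine. Suppose $G$ contains $\Gamma$ as a subgraph and $f:V(G)\to V(C_5)$ were a homomorphism. Restricting $f$ to $V(\Gamma)$ gives $\Gamma\to C_5$, so it suffices to show $\Gamma\not\to C_5$. I would do this by a short case analysis on the $5$-cycles of $\Gamma$. For instance, $x_0x_1x_2x_3x_7x_0$ is a $5$-cycle, since $x_3x_7$ is a chord. A homomorphism from $C_5$ to $C_5$ must be bijective and wind once around the target. Fixing such an image for one $5$-cycle forces the images of the remaining vertices through the other $5$-cycles, which pass through the chords $x_ix_{i+4}$. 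Following these forced choices around the Möbius twist should produce two adjacent vertices with the same image, or an edge mapped to a non-edge. The contradiction is that the ``rotation direction'' flips across the twist.

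For the ``only if'' direction, assume $G$ is triangle-free, $\delta>\frac n3$ and $G\not\to C_5$; we must find $\Gamma$ in $G$ itself. I would not pass to a maximal triangle-free supergraph. That preserves the hypotheses, but any copy of $\Gamma$ found there need not lie in $G$. Instead I would work directly with a short odd cycle:
\begin{itemize}
\item $G$ is not bipartite, since bipartite graphs map to $C_5$.
\item Let $C$ be a shortest odd cycle, of length $\ell$. Minimality implies every vertex off $C$ has at most $2$ neighbours on $C$, and every vertex of $C$ has exactly $2$. Counting edges between $C$ and $V(G)$ gives $\ell\delta\le 2n$, so $\ell<6$.
\item Hence $\ell=5$. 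Write $C=c_0c_1c_2c_3c_4$.
\end{itemize}
Every vertex $u$ then has $|N(u)\cap C|\le 2$, and if it equals $2$ the two neighbours are $c_{i-1},c_{i+1}$ for a unique $i$. Let $V_i$ be the set of such ``clones'' of $c_i$ (including $c_i$), and let $W$ be the set of vertices with at most one neighbour on $C$. Triangle-freeness makes each $V_i$ independent, and $V_i$ has no edges to $V_{i\pm2}$. So the map $V_i\mapsto c_i$ is a homomorphism on $\bigcup V_i$. Summing degrees into $C$ gives $5\delta\le 2(n-|W|)+|W|$, that is, $|W|\le 2n-5\delta<\frac n3$.

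Since $G\not\to C_5$, some vertices of $W$ cannot be assigned an image consistently. The core step is to show that a failed extension yields a second $5$-cycle $C'$ sharing a specific pattern with $C$, which together with $C$ spans a copy of $\Gamma$. For each $w\in W$ I would try to map $w$ to some $c_j$ compatible with its neighbours. I would pick $C$ to maximise $|\bigcup_i V_i|$, or to minimise $|W|$, and use $\delta>\frac n3$ through common-neighbour counting: two nonadjacent vertices whose neighbourhoods together exceed $\frac{2n}{3}$ must share many neighbours, which lets me re-route $5$-cycles. An obstruction should then arise as two vertices of $W$ with neighbourhoods meeting $V_i$ and $V_{i+2}$ in a ``twisted'' manner. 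Together with suitable representatives from the $V_i$, these give the eight vertices $x_0,\dots,x_7$ with the required adjacencies.

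I expect the main obstacle to be this last extraction step. The degree bound only controls counts, while we need a concrete $8$-vertex configuration, and the choice of $C$ (extremal in a suitable sense) has to be made so that any vertex of $W$ which cannot be coloured forces exactly the Möbius twist. If the direct analysis becomes unwieldy, my fallback is induction on $n$. I would delete a vertex or a small set while keeping the ratio $\delta>\frac{|V|}{3}$ and the non-$C_5$-colourability, reducing to a vertex-critical minimal counterexample. In that setting the neighbourhood structure is rigid enough to identify $\Gamma$ (or a blow-up of it) directly.
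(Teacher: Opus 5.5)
The paper does not prove this lemma; it quotes it from Brandt. Your proposal therefore has to be judged on its own.

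Your ``if'' direction is correct and closes in three lines. Normalise $x_0,x_1,x_2,x_3,x_7\mapsto 0,1,2,3,4$ along the $5$-cycle you name. Then $x_4$ (adjacent to $x_3,x_0$) is forced to $4$, and $x_6$ (adjacent to $x_7,x_2$) is forced to $3$. So $x_5$, adjacent to both, would need a common neighbour of the adjacent vertices $3,4$ of $C_5$, which does not exist.

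Your set-up for the converse is also sound. The shortest odd cycle has length $5$, the classes $V_i$ are independent with no $V_i$--$V_{i\pm 2}$ edges, and $|W|\le 2n-5\delta<\frac n3$. A point worth stating explicitly: any homomorphism $G\to C_5$ is bijective on $C$, hence forced on $\bigcup V_i$ up to an automorphism of $C_5$. So $G\to C_5$ exactly when this partial map extends to $W$.

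The genuine gap is that the ``only if'' direction stops there. The step ``a failed extension yields a second $5$-cycle which together with $C$ spans $\Gamma$'' is the substance of the cited theorem, and you give no argument for it. Two different failures have to be handled, and you handle neither.

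First, $G[V_i\cup V_{i+1}]$ need not be complete bipartite. So a vertex $w\in W$ can have neighbours $u\in V_0$ and $v\in V_1$, and then it has no admissible image at all. This does give $\Gamma$ when $w$ also has a neighbour $z\in V_3$. Triangle-freeness forces $u\ne c_0$ and $v\ne c_1$, and the cycle $c_0c_4zc_2c_1uwvc_0$ with chords $c_0c_1$, $c_4u$, $zw$, $c_2v$ is a copy of $\Gamma$. But you would have to prove that such a $z$ exists, or that $C$ can be re-chosen so the obstruction takes this form. This is where $\delta>\frac n3$ must enter through a genuine inequality; the bound $|W|<\frac n3$ says nothing about how $N(w)$ is spread over the classes.

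Second, even if every vertex of $W$ has an admissible image, the extension can fail through edges inside $W$. Examples are two adjacent vertices of $W$ whose forced images are non-adjacent in $C_5$, or longer chains of such constraints. Your phrase ``two vertices of $W$ meeting $V_i$ and $V_{i+2}$ in a twisted manner'' describes a hoped-for outcome, not an argument that such a configuration produces $\Gamma$. The extremal choice of $C$ and the ``common-neighbour counting'' are never tied to a concrete inequality, so they close neither case.

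The fallback induction fails as well. Deleting $k$ vertices can lower $\delta$ by $k$, so $\delta>|V|/3$ survives only if $\delta>\frac{n+2k}{3}$, which need not hold near the threshold. You also give no reason why a vertex-critical minimal counterexample would be rigid. As written, the ``only if'' direction is not proved.
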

	
To state the next lemma, we first introduce some notation and terminology concerning graph detachments, following Bahmanian and Rodger \cite {BR2012}. We emphasize that the notion of edge-coloring used in the next paragraph is different from the proper edge-coloring considered elsewhere in this paper. Here, a {\bf $k$-edge-coloring} is simply an assignment of $k$ colors to the edges, and adjacent edges are allowed to receive the same color. Let $H$ be a multigraph. An {\bf $\eta$-detachment} of $H$ is a graph $G$ obtained by replacing each vertex $w \in V(H)$ by $\eta(w)$ vertices. More precisely, there is a surjective map $\psi:V(G) \rightarrow V(H)$, called the {\bf amalgamation function}, such that $|\psi^{-1}(w)|=\eta(w)$ for every $w \in V(H)$. We write $\ell_H(w)$ for the number of loops at $w$, $m_H(w,z)$ for the number of edges joining $w$ and $z$, where $w, z \in V(H)$, $\omega(H)$ for the number of connected components of $H$, $H(i)$ for the subgraph of $H$ induced by the edges colored $i$, and $d_H(v)$ for the degree of a vertex $v$, where parallel edges contribute their multiplicity to the degree and each loop adds 2 to the degree. For real numbers $x$ and $y$, we write $x \approx y$ if $\lfloor y \rfloor \le x \le  \lceil y \rceil$. $N_{+}$ denotes the set of positive integers. We will use the following consequence of Bahmanian and Rodger \cite{BR2012} in the proof of Lemma \ref{L4.3}.

\begin{Lemma}[\cite{BR2012}]\label{L2.9}
Let $H$ be a $k$-edge-colored graph and let $\eta$ be a function from $V(H)$ into $N_{+}$ such that for each $w \in V(H)$, $\eta(w)=1$ implies $\ell_{H}(w)=0$. Then there exists a loopless $\eta$-detachment $G$ of $H$ with amalgamation function $\psi:V(G) \rightarrow V(H)$, $\eta$ being the number function associated with $\psi$, such that $G$ satisfies the following conditions:
\item [{$\mathrm{(1)}$}] $d_{G(i)}(u) \approx d_{H(i)}(w)/\eta(w)$ for each $w \in V(H)$, each $u \in \psi^{-1}(w)$, and each $i \in [1,k]$;    
\item [{$\mathrm{(2)}$}] $m_{G}(u,u^{'}) \approx \ell_{H}(w)/\binom{\eta(w)}{2}$ for each $w \in V(H)$ with $\eta(w) \ge 2$ and every pair of distinct vertices $u,u^{'} \in \psi^{-1}(w)$;
\item [{$\mathrm{(3)}$}] $m_{G}(u,v) \approx m_{H}(w,z)/(\eta(w)\eta(z))$ for every pair of distinct vertices $w,z \in V(H)$, each $u \in \psi^{-1}(w)$, and each $v \in \psi^{-1}(z)$;  
\item [{$\mathrm{(4)}$}] For each $i \in [1,k]$ such that $d_{H(i)}(w)/\eta(w)$ is an even integer for every $w \in V(H)$, the detachment satisfies $\omega(G(i))=\omega(H(i))$.     
\end{Lemma}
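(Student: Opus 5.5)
This is the detachment theorem of Bahmanian and Rodger \cite{BR2012}, which the paper only quotes; the plan below reconstructs the standard Nash-Williams-style argument rather than following a proof given here. I would argue by induction on $\sum_{w\in V(H)}(\eta(w)-1)$. If this sum is $0$, then $\eta\equiv 1$, the hypothesis forces $H$ to be loopless, and $G=H$ satisfies (1)--(4) trivially. Otherwise choose $w$ with $\eta(w)\ge 2$ and perform a single \emph{splitting-off} step. This step creates a new vertex $u$ and transfers to it a subset of the edge-ends at $w$, so that in the new graph $H'$:
\begin{itemize}
\item for every colour $i$, $u$ receives about a $1/\eta(w)$ share of $d_{H(i)}(w)$;
\item for every $z\neq w$, $u$ receives about a $1/\eta(w)$ share of the $m_H(w,z)$ edges to $z$;
\item the $\ell_H(w)$ loops are redistributed, some becoming $u$--$w$ edges and about a $1/\binom{\eta(w)}{2}$ share of them staying as loops at $w$.
\end{itemize}
We then set $\eta'(u)=1$ and $\eta'(w)=\eta(w)-1$ and apply induction to $H'$.

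The key arithmetic point is that the approximations do not compound. If $u$ takes $x\approx d/\eta$ of a quantity $d$, then $(d-x)/(\eta-1)$ still lies in $[\lfloor d/\eta\rfloor,\lceil d/\eta\rceil]$. So the targets for the remaining copies of $w$ stay within the original floor/ceiling window. The same elementary check applies to loops with $\binom{\eta}{2}$ and $\binom{\eta-1}{2}$, once the $u$--$w$ edges are counted correctly. I would verify this inequality first, since the whole induction rests on it.

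To choose which edge-ends $u$ receives, I would pair the edge-ends at $w$ and walk along each colour class $H(i)$. Taking every $\eta(w)$-th edge-end in an Euler-type traversal of the star at $w$ balances the colour degrees. For the pair multiplicities I would build an auxiliary bipartite graph with one side the colours and the other side the neighbours $z$ (plus a ``loop'' class), whose edges are the edges at $w$. A balanced (equitable) edge-colouring of this bipartite graph with $\eta(w)$ classes, which exists by the usual Kőnig/de Werra argument, gives a class to hand to $u$. That class is simultaneously near-equal in every colour and in every neighbour class. Loops need care because each contributes two edge-ends; I would split each loop into two half-edges before balancing and recombine them afterwards. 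The hypothesis that $\eta(w)=1$ implies $\ell_H(w)=0$ guarantees the final graph is loopless.

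The main obstacle is (4), preserving $\omega(G(i))$ when every $d_{H(i)}(w)/\eta(w)$ is even. I would handle it by choosing $u$'s share of colour $i$ along an Euler circuit of each component of $H(i)$, which exists since all degrees in $H(i)$ are even. I would take consecutive pairs of edges through $w$ (an entry and an exit) so that $u$ lies on a closed sub-trail of that circuit, and the remainder at $w$ still carries a closed trail. Evenness ensures $u$ can receive a whole number of such pairs while staying within the approximate share. If a component containing $w$ becomes disconnected, I would repair it by an alternating-trail swap that exchanges one transferred pair for another in the same Euler circuit. The difficulty is making this swap compatible with the equitable bipartite balancing of the previous step. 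I would try first to perform the equitable colouring \emph{within} the pairing structure, treating each entry/exit pair of colour $i$ as a single edge of the auxiliary graph, so that balancing and connectivity are enforced together.
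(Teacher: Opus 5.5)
The paper gives no proof of this lemma; it quotes it from Bahmanian and Rodger. The standard argument there detaches one vertex at a time and chooses the set $A$ of hinges at $w$ handed to $u$ by Nash-Williams's lemma: for two laminar families $\mathcal{A},\mathcal{B}$ of subsets of a finite set there is $A$ with $|A\cap P|\approx |P|/\eta(w)$ for every $P\in\mathcal{A}\cup\mathcal{B}$.

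Your induction is sound, and so is the non-compounding arithmetic, including splitting $y\approx 2\ell_H(w)/\eta(w)$ loops and the nested floors and ceilings needed for (3). Taking one class of an equitable colouring of your colour/neighbour bipartite graph is exactly the special case of that lemma in which both families are partitions. That suffices for (1)--(3), except that loops must be controlled colour by colour. At most $\ell_i$ of the $2\ell_i$ half-edges of colour-$i$ loops may go to $u$, and exactly $\ell_i$ when $\eta(w)=2$, since then no loop may stay at $w$. Equitability at a single loop vertex does not guarantee this: both halves of the only colour-$1$ loop can fall in the chosen class while the colour-$2$ loop stays whole. Adding each loop's hinge pair as a subset nested inside the loop class fixes this.

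The genuine gap is (4), which you leave open, and the route you suggest does not close it.
\begin{itemize}
\item An entry/exit transition cannot be a single edge of the auxiliary graph. Its two hinges generally go to different neighbours $z_1\neq z_2$ (or one is half of a loop), so it has no single endpoint on the neighbour side.
\item The requirement that $u$ receive whole transitions, $|A\cap P|\in\{0,2\}$, is not of the balanced form such selection tools deliver.
\item The repair swap fails. If $u$ and $w$ lie in different colour-$i$ components, their colour-$i$ neighbours are disjoint. Exchanging $uz_1$ with $wz_2$ then necessarily shifts one of $u$'s edges from $z_1$ to some $z_2\neq z_1$, which can destroy (3).
\item If $u$ did receive whole transitions of one Euler circuit, nothing would disconnect anyway, so the repair addresses the wrong problem.
\end{itemize}

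The missing idea is to pair the other way and demand \emph{at most one}. Write the Euler circuit of the component of $H(i)$ through $w$ as closed sub-trails $S_1,\dots,S_t$ between consecutive visits to $w$. Add the pair $\{f_k,l_k\}$ of first and last hinges of each $S_k$ to the colour family. These pairs partition the colour-$i$ hinges, so the family stays laminar. Since $\eta(w)\ge 2$, the lemma gives $|A\cap\{f_k,l_k\}|\le 1$. It also gives exactly $d_{H(i)}(w)/\eta(w)\ge 2$ hinges of colour $i$ in $A$ (when $w$ meets colour $i$), together with all the other balancing.

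Then every $S_k$ still meets $w$, and at least one becomes a $u$--$w$ trail, so $\omega(H'(i))=\omega(H(i))$. Because $u$'s share is exact, the evenness hypothesis passes to $(H',\eta')$, and the induction carries (4). Pairing by transitions with ``at most one'' would not work: the $S_k$ could then alternate between closed trails at $u$ and at $w$, and the component could split.
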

	
\section{Proof of Theorem 1.1}
\begin{proof}
In this proof, we denote $\Delta(G)$ by $\Delta$. It is easy to see that a regular Class 1 graph has even order, and every graph obtained from a regular graph of even order by a vertex-splitting is overfull. Thus, $G^*$ is overfull, and so $G^*$ is of Class 2. Therefore, it remains to prove that every edge of $G^\ast$ is critical. 
		
Suppose to the contrary that there exists $xy \in E(G^\ast)$ such that $xy$ is not a critical edge of $G^\ast$. Let $H=G^\ast-xy$. Then $\chi'(H)=\Delta+1$. Since $\Delta(H) \le \Delta$, Vizing's theorem implies that $\Delta(H)=\Delta$. By Lemma \ref{L2.2}, every edge incident with $a$ or $b$ is critical in $G^*$. Hence, $\{a,b\} \cap \{x,y\}=\emptyset$. Now we have $d_{H}(a)=d_{G^\ast}(a)<\Delta$, $d_{H}(b)=d_{G^\ast}(b)<\Delta$, and $d_{H}(x)=d_{H}(y)=\Delta-1$. Since $d_{G^\ast}(a)+d_{G^\ast}(b)=\Delta+2$, $(a,b)$ is a full-deficiency pair in $G^\ast$. Therefore, $(a,b)$ remains a full-deficiency pair in $H$. We know that $ab$ is a critical edge of $G^\ast$, and hence $\chi'(G^\ast-ab)=\Delta$. Since $H-ab \subseteq G^\ast-ab$, we have $\Delta \le \chi'(H-ab) \le \chi'(G^\ast-ab) = \Delta$, which implies that $\chi'(H-ab)=\Delta$ and $ab$ is also a critical edge of $H$. Thus, we have $\operatorname{dist}_{H}(x,\{a,b\}) \ge 3$ and $\operatorname{dist}_{H}(y,\{a,b\}) \ge 3$ by Lemma \ref{L2.1}. Let $T:=V(H) \setminus (N_H[a] \cup N_H[b])$. Since $G$ is triangle-free, $G^*$ is triangle-free, and consequently $H$ is triangle-free. Together with $ab \in E(H)$, this implies that $(N_H(a) \setminus \{b\}) \cap (N_H(b) \setminus \{a\})=\emptyset$. Hence $|N_H[a] \cup N_H[b]|=d_H(a)+d_H(b)=\Delta+2$. Since $|V(H)|=|V(G^*)|=n+1$, it follows that $|T|=n-1-\Delta$.

We claim that $N_{H}[x] \cup N_{H}[y] \subseteq T$; otherwise, this would contradict $\operatorname{dist}_{H}(x,\{a,b\}) \ge 3$ and $\operatorname{dist}_{H}(y,\{a,b\}) \ge 3$. Since $G^*$ is triangle-free and $xy \in E(G^*)$, this gives $N_{G^*}(x) \cap N_{G^*}(y)=\emptyset$, and hence $N_H(x) \cap N_H(y)=\emptyset$. Moreover, $xy \notin E(H)$. Hence, the sets $\{x, y\}$, $N_H(x)$, and $N_H(y)$ are pairwise disjoint. Recall that $N_{H}[x] \cup N_{H}[y] \subseteq T$, so we have $|T| \ge |N_{H}[x] \cup N_{H}[y]|=|N_{H}[x]|+|N_{H}[y]|=2+|N_{H}(x)|+|N_{H}(y)|=2+2(\Delta-1)=2\Delta$. Together with $|T|=n-1-\Delta$, we obtain $\Delta(G) \le \frac{n-1}{3}$, contradicting the condition $\Delta(G)>\frac{n-1}{3}$. This completes the proof.             
\end{proof}
	
\section{Extreme vertex-splitting}
In this section, we develop the structural and Hamilton-decomposition tools required to handle extreme vertex-splittings. We first determine where a potentially noncritical edge could lie, then show that such an edge is critical whenever the original graph has a suitable Hamilton decomposition, and finally combine structural properties of graphs homomorphic to $C_5$ with the detachment method to construct the required Hamilton decompositions. These results will be applied in the proofs of Theorems \ref{T1.2}-\ref{T1.4}.

We begin by fixing notation for an extreme vertex-splitting. Let $G$ be an $n$-vertex connected triangle-free $\Delta$-regular graph, and $G^*$ be obtained from $G$ by an extreme vertex-splitting: a vertex $b \in V(G)$ is split into adjacent vertices $a$ and $b$ such that $d_{G^*}(a)=2$ and $d_{G^*}(b)=\Delta$. Equivalently, for some $c \in N_G(b)$, $G^*$ is obtained from $G$ by replacing $bc$ with the path $(b,ba,a,ac,c)$. Thus, $d_{G^*}(v)=\Delta$ for every $v \in V(G^*) \setminus \{a\}$. Set $B:=N_{G^*}(b) \setminus \{a\}$ and $C:=N_{G^*}(c) \setminus \{a\}$. Then $V(G^*)=V(G) \cup \{a\}$ and $N_{G^*}[b] \cup N_{G^*}[c]=\{a,b,c\} \cup B \cup C$, so $R:=V(G^*) \setminus (N_{G^*}[b] \cup N_{G^*}[c])=V(G) \setminus (\{b,c\} \cup B \cup C)$. Consequently, $G^*[R] = G[R]$. Notice that $B$ and $C$ are disjoint, since $G$ is triangle-free.

\begin{Lemma}\label{L4.1}
Let $G$ be a triangle-free $\Delta$-regular Class 1 graph. Let $G^\ast$ be obtained from $G$ by an extreme vertex-splitting. If $xy$ is not a critical edge of $G^\ast$, then $xy \in E(G^\ast[R])$.
\end{Lemma}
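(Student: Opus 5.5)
We argue exactly as in the proof of Theorem~\ref{T1.1}, but with the full-deficiency pair $(a,b)$ replaced by the pair $(a,c)$ or $(a,b)$ as appropriate. Suppose $xy\in E(G^*)$ is not critical and set $H:=G^*-xy$. Since $G^*$ is overfull, it is of Class~2, so $\chi'(H)=\Delta+1$. Vizing's theorem then forces $\Delta(H)=\Delta$. By Lemma~\ref{L2.2}, every edge incident with $a$ or $b$ is critical in $G^*$, so $\{x,y\}\cap\{a,b\}=\emptyset$.

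\textbf{Full-deficiency pairs.} In $G^*$ we have $d(a)=2$ and $d(b)=d(c)=\Delta$, so both $(a,b)$ and $(a,c)$ are full-deficiency pairs in $G^*$. The edges $ab$ and $ac$ are both incident with $a$, so Lemma~\ref{L2.2} gives $\chi'(G^*-ab)=\chi'(G^*-ac)=\Delta$. Exactly as in Section~3, monotonicity of $\chi'$ under taking subgraphs then gives $\chi'(H-ab)=\chi'(H-ac)=\Delta$. Hence $ab$ and $ac$ are critical edges of $H$.

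\textbf{Excluding $c$ and $B\cup C$.} We must check that $(a,b)$ and $(a,c)$ remain full-deficiency pairs in $H$. If $c\in\{x,y\}$, then $d_H(c)=\Delta-1$ and $(a,c)$ is no longer full-deficiency. In that case we use the pair $(a,b)$: it is still full-deficiency in $H$, because $b\notin\{x,y\}$ and so $d_H(b)=\Delta$. Now $c\in N_H(a)\setminus\{b\}$ with $d_H(c)=\Delta-1$, which contradicts Lemma~\ref{L2.1}(1). So $c\notin\{x,y\}$, and both pairs are full-deficiency in $H$.

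Next suppose $x\in B$. Then $x\in N_H(b)$, and since $x\in\{x,y\}$ we have $d_H(x)=\Delta-1$. This again contradicts Lemma~\ref{L2.1}(1) applied to $(a,b)$. Symmetrically, $x\in C$ contradicts Lemma~\ref{L2.1}(1) applied to $(a,c)$. The same holds for $y$.

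\textbf{Conclusion.} Since $V(G^*)=\{a,b,c\}\cup B\cup C\cup R$, it follows that $x,y\in R$. Therefore $xy\in E(G^*[R])$.

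The only delicate point is the case $c\in\{x,y\}$. There one cannot use the pair $(a,c)$, because it has lost full deficiency in $H$, and must switch to the pair $(a,b)$, noting that $c$ is a neighbour of $a$ other than $b$. Triangle-freeness is not really needed for this lemma; it is needed only for the identity $G^*[R]=G[R]$ and the disjointness of $B$ and $C$ noted before the statement.
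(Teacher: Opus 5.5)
Your proof is correct and follows essentially the same route as the paper: the paper applies Lemma~\ref{L2.1}(1) to the pair $(a,b)$ to exclude all of $\{c\}\cup B$ at once (your separate case $c\in\{x,y\}$ is exactly this step, since $c\in N_H(a)$). It then uses $d_H(c)=\Delta$ to make $(a,c)$ a full-deficiency pair with $ac$ critical in $H$, and excludes $C$ the same way you do. One small correction to your closing aside: the identity $G^*[R]=G[R]$ holds without triangle-freeness, which is needed only for the disjointness of $B$ and $C$.
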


\begin{proof}
By Lemma \ref{L2.2}, $\{x,y\} \cap \{a,b\}=\emptyset$ since $xy$ is not a critical edge. Let $H=G^*-xy$. Since $xy$ is not a critical edge and $G^\ast$ is of Class 2, $\chi'(H)=\chi'(G^*)=\Delta+1$. As $\Delta(H) \leq \Delta$, Vizing's theorem implies that $\Delta(H)=\Delta$. Moreover, $d_H(x)=d_H(y)=\Delta-1$. Since $xy$ is incident with neither $a$ nor $b$, we have $d_H(a)+d_H(b)=d_{G^*}(a)+d_{G^*}(b)=\Delta+2$. Thus, $(a,b)$ is a full-deficiency pair of $H$. We also claim that $ab$ is a critical edge of $H$. Indeed, $ab$ is a critical edge of $G^*$, and hence $G^*-ab$ is of Class 1. Since $H-ab \subseteq G^*-ab$, we have $\Delta \le \chi'(H-ab) \le \chi'(G^\ast-ab)=\Delta$, which implies that $\chi'(H-ab)=\Delta$ and $ab$ is also a critical edge of $H$.

Since the vertex-splitting is extreme, $a$ has precisely two neighbors $b$ and $c$, and hence $(N_H(a) \cup N_H(b)) \setminus \{a,b\}=\{c\} \cup B$. It follows that every vertex in $\{c\} \cup B$ has degree $\Delta$ in $H$ by Lemma \ref{L2.1}(1). Since $d_H(x)=d_H(y)=\Delta-1$, $x,y \notin \{c\} \cup B$. In particular, $d_H(c)=\Delta$. Since $d_H(a)=2$, we have $d_H(a)+d_H(c)=\Delta+2$, and hence $(a,c)$ is also a full-deficiency pair of $H$. Furthermore, $ac$ is a critical edge of $H$. Indeed, $ac$ is incident with $a$, so $ac$ is a critical edge of $G^*$ by Lemma \ref{L2.2}; consequently, $\Delta \le \chi'(H-ac) \le \chi'(G^*-ac) = \Delta<\chi'(H)$. By Lemma \ref{L2.1}(1), we obtain that every vertex in $(N_H(a) \cup N_H(c)) \setminus \{a,c\}=\{b\} \cup C$ has degree $\Delta$ in $H$. Hence, $x,y \notin C$ since $d_H(x)=d_H(y)=\Delta-1$. Note that $\{x,y\} \cap \{a,b\}=\emptyset$. Now we get $x,y \notin \{a,b,c\} \cup B \cup C$. Since $R = V(G^*) \setminus (N_{G^*}[b] \cup N_{G^*}[c])$, we have $x,y \in R$. Therefore, $xy \in E(G^*[R])$. This completes the proof. 
\end{proof}

\begin{Lemma}\label{L4.2}
Let $G$ be a $2r$-regular graph of even order $n \ge 4$ admitting a Hamilton decomposition $H_1,\dots,H_r$. Let $bc$ and $xy$ be two edges of $H_1$. Suppose that $\{b,c\} \cap \{x,y\}=\emptyset$, and let $G^*$ be obtained from $G$ by subdividing the edge $bc$ with a new vertex $a$; equivalently, replacing $bc$ by the path $(b,ba,a,ac,c)$. Then $\chi'(G^*-xy)=2r$ and $xy$ is a critical edge of $G^*$.
\end{Lemma}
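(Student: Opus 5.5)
The plan is to write down a proper $2r$-edge-coloring of $G^*-xy$ directly, using the Hamilton decomposition to color the bulk of the edges and the subdivided cycle to absorb the new vertex $a$.

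First, the cycles $H_2,\dots,H_r$. Each $H_i$ with $i\in[2,r]$ is a Hamilton cycle of $G$. Since $n$ is even, each is an even cycle and therefore has a proper $2$-edge-coloring. I would give $H_i$ the colors $2i-1$ and $2i$. This uses the $2(r-1)$ colors in $[3,2r]$ and colors every edge of $G^*-xy$ not coming from $H_1$. These edges miss $a$ entirely, and at every vertex of $G$ each color in $[3,2r]$ appears exactly once.

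Next, the edges coming from $H_1$. In $G^*$, the cycle $H_1$ becomes a cycle $H_1^*$ of length $n+1$: the edge $bc$ is replaced by the path $(b,ba,a,ac,c)$. Because $\{b,c\}\cap\{x,y\}=\emptyset$, the edge $xy$ is still an edge of $H_1^*$. Hence $H_1^*-xy$ is a Hamilton path of $G^*$ with endpoints $x$ and $y$. A path is properly $2$-edge-colorable by alternating, so I would color it with colors $1$ and $2$. The edge sets of $H_1^*-xy$ and of $H_2,\dots,H_r$ partition $E(G^*-xy)$, and they use disjoint color sets. So the union is a proper $2r$-edge-coloring of $G^*-xy$, which gives $\chi'(G^*-xy)\le 2r$.

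Equality holds because $b$ still has degree $2r$ in $G^*-xy$, so $\Delta(G^*-xy)=2r$.

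For criticality, I would check that $G^*$ is overfull:
\[
|V(G^*)|=n+1 \text{ is odd},\qquad |E(G^*)|=nr+1>2r\cdot\tfrac{n}{2}=\Delta(G^*)\left\lfloor\tfrac{n+1}{2}\right\rfloor .
\]
Hence $\chi'(G^*)=2r+1>\chi'(G^*-xy)$, so $xy$ is a critical edge.

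I do not expect a real obstacle here. The only point needing care is confirming that removing $xy$ from the odd cycle $H_1^*$ leaves a single path. This is exactly where the hypotheses are used: disjointness of $\{b,c\}$ and $\{x,y\}$ ensures $xy$ survives the subdivision, and $n\ge 4$ ensures two such disjoint edges can exist on $H_1$.
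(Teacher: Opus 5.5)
Your proof is correct and matches the paper's argument. The paper colors $H_1$ alternately, swaps colors $1$ and $2$ on the segment of $H_1-\{bc,xy\}$ containing $b$, and gives $ab,ac$ the colors $2,1$; this yields exactly your alternating $2$-coloring of the Hamilton path $H_1^*-xy$, and the remaining steps (the degree of $b$ and the overfullness of $G^*$) are the same as yours.
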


\begin{proof}
For each $i \in [1,r]$, color the edges of the Hamilton cycle $H_i$ alternately with the colors $2i-1$ and $2i$, and assume that $bc$ receives color 1. Delete the edges $bc$ and $xy$. Since $\{b,c\} \cap \{x,y\} = \emptyset$ and $bc,xy \in H_1$, the graph $H_1-\{bc,xy\}$ consists of two paths, and $b$ and $c$ lie in different paths. Let $P$ be the path containing $b$. Interchange colors 1 and 2 on $P$. Color 2 is now missing at $b$, while color 1 remains missing at $c$, where a color is missing at a vertex if it is in $[1, 2r]$ but not assigned to any edge incident with that vertex. Assign colors 2 and 1 to $ab$ and $ac$, respectively. This gives a proper $2r$-edge-coloring of $G^* - xy$. Since $bc$ and $xy$ are vertex-disjoint, $d_{G^*-xy}(b)=2r$ and $\Delta(G^*-xy)=2r$, and hence $\chi'(G^*-xy)=2r$. Since $|E(G)|=rn$ where $n$ is the order, $|E(G^*)|=rn+1>rn=2r \lfloor \frac{n+1}{2} \rfloor=\Delta \lfloor \frac{|V(G^*)|}{2} \rfloor$. Hence, $G^*$ is overfull, so it is of Class 2. Consequently, $xy$ is a critical edge of $G^*$. This completes the proof. 
\end{proof}

\begin{Lemma}\label{L4.31}
Let $G$ be an $n$-vertex $\Delta$-regular non-bipartite graph, and suppose that $G$ is homomorphic to $C_5$. Let $f:V(G) \rightarrow V(C_5)$, $V_i=f^{-1}(i)$ and $n_i=|V_i|$ for $i \in [0,4]$, where all subscripts are taken modulo 5. Define $p_i:=n_{i-1}+n_{i+1}-\Delta$ and $q_i:=n_in_{i+1}-|E(G[V_i \cup V_{i+1}])|$. Then, for every $i \in [0,4]$, $n_i>0$, $p_i \ge 0$, $q_{i-1}+q_i=n_ip_i$, and if $p_i>0$, then $p_{i-1}>0$ or $p_{i+1}>0$. Moreover, $\sum_{i=0}^{4}p_i=2n-5\Delta$.
\end{Lemma}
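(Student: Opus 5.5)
The plan is to derive everything from one observation. Since $f$ is a homomorphism to $C_5$, every edge of $G$ joins $V_i$ to $V_{i\pm1}$ for some $i$: each $V_i$ is independent, and the neighbours of any $v\in V_i$ lie in $V_{i-1}\cup V_{i+1}$.

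\emph{Positivity of $n_i$.} If some $n_i=0$, then $f$ maps into a path $C_5-i$, which is bipartite. Then $G$ would be bipartite, contrary to hypothesis. So $n_i>0$ for all $i$.

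\emph{Non-negativity of $p_i$.} Take any $v\in V_i$, which exists since $n_i>0$. Then $\Delta=d_G(v)\le n_{i-1}+n_{i+1}$, so $p_i\ge 0$.

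\emph{Identity $q_{i-1}+q_i=n_ip_i$.} Count the non-edges between $V_i$ and $V_{i-1}\cup V_{i+1}$. Each $v\in V_i$ has exactly $\Delta$ neighbours in $V_{i-1}\cup V_{i+1}$. Hence the number of missing pairs from $v$ to $V_{i-1}\cup V_{i+1}$ is $n_{i-1}+n_{i+1}-\Delta=p_i$. Summing over $v\in V_i$ gives $n_ip_i$. The same total splits as (missing pairs in $V_{i-1}\times V_i$) plus (missing pairs in $V_i\times V_{i+1}$), which is $q_{i-1}+q_i$. Here I use that $G[V_i\cup V_{i+1}]$ has only edges between $V_i$ and $V_{i+1}$, since both sets are independent.

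\emph{Sum formula.} Sum $p_i$ over $i$. Each $n_j$ appears twice (as $n_{(j+1)-1}$ and $n_{(j-1)+1}$), so $\sum p_i=2n-5\Delta$.

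\emph{Implication for $p_i>0$.} This is the only step needing an idea. Suppose $p_i>0$ but $p_{i-1}=p_{i+1}=0$.
\begin{itemize}
\item From $p_{i+1}=0$ and the identity at index $i+1$, we get $q_i+q_{i+1}=0$. Since each $q_j\ge0$, this gives $q_i=0$.
\item From $p_{i-1}=0$ and the identity at index $i-1$, we get $q_{i-2}+q_{i-1}=0$, so $q_{i-1}=0$.
\item The identity at index $i$ then gives $n_ip_i=q_{i-1}+q_i=0$. This contradicts $n_i>0$ and $p_i>0$.
\end{itemize}
Hence $p_{i-1}>0$ or $p_{i+1}>0$.

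None of these steps is a real obstacle once the identity $q_{i-1}+q_i=n_ip_i$ is in place. The identity is the step to verify carefully: it rests on $V_i$ being independent and all edges at $V_i$ going to $V_{i\pm1}$. The rest is bookkeeping, with indices taken mod 5.
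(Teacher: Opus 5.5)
Your proposal is correct and follows the paper's proof step for step. You use the same five steps in the same way: $n_i>0$ because $C_5$ minus a vertex is bipartite; $p_i\ge 0$ from the degree bound; double counting the missing pairs to get $q_{i-1}+q_i=n_ip_i$, with $q_j\ge 0$ coming from the independence of the parts; the contradiction $n_ip_i=0$ when $p_{i-1}=p_{i+1}=0$; and the direct computation of $\sum_{i=0}^{4} p_i$.
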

\begin{proof}
We first show that $n_i>0$ for every $i \in [0,4]$. Suppose that $V_i=\emptyset$ for some $i$. Then the image of $f$ is contained in a proper subgraph of $C_5$. Since every proper subgraph of $C_5$ is bipartite, $G$ is bipartite, contradicting the fact that $G$ is non-bipartite. Since $G$ is homomorphic to $C_5$, $V_i$ is an independent set for every $i \in [0,4]$. Moreover, every edge of $G$ has its endvertices in two consecutive parts. Thus, every vertex $v \in V_i$ has all its neighbors in $V_{i-1} \cup V_{i+1}$. Since $G$ is $\Delta$-regular, $\Delta=d_G(v) \le n_{i-1}+n_{i+1}$. Hence, $p_i=n_{i-1}+n_{i+1}-\Delta \ge 0$. For each fixed vertex $v \in V_i$, the number of vertices in $V_{i-1} \cup V_{i+1}$ that are not adjacent to $v$ is $p_i=n_{i-1}+n_{i+1}-\Delta$. Hence, the total number of missing adjacencies between $V_i$ and $V_{i-1} \cup V_{i+1}$ is $n_ip_i$. On the other hand, the number of missing edges between $V_{i-1}$ and $V_i$ is $q_{i-1}$, while the number of missing edges between $V_i$ and $V_{i+1}$ is $q_i$. Therefore, $q_{i} \ge 0$ and $q_{i-1}+q_i=n_ip_i$. We next show that, if $p_i>0$, then $p_{i-1}>0$ or $p_{i+1}>0$. Otherwise, from $q_{i-1}+q_i=n_ip_i$ we get $q_{i-2}+q_{i-1}=0$ and $q_i+q_{i+1}=0$. Thus, $q_{i-1}=q_i=0$ and $n_ip_i=q_{i-1}+q_i=0$, contradicting $n_i>0$ and $p_i>0$. Finally, $\sum_{i=0}^{4}p_i=\sum_{i=0}^{4}(n_{i-1}+n_{i+1}-\Delta)=2\sum_{i=0}^{4}n_i-5\Delta=2n-5\Delta$. This completes the proof. 
\end{proof}

\begin{Lemma}\label{L4.3}
Let $r\ge2$ and $G$ be a $2r$-regular graph with $V(G)=V_0\dot{\cup}\cdots\dot{\cup}V_4$, where $V_i$ for $i \in [0, 4]$ is an independent set, $|V_0|=|V_1|=r+1$, $|V_2|=|V_4|=r$ and $|V_3|=r-1$. Suppose that $G[V_0 \cup V_1] \cong K_{r+1,r+1} - M$ for a perfect matching $M$, every other consecutive pair $V_i$ and $V_{i+1}$ induces a complete bipartite graph, and there are no edges between nonconsecutive parts.
Let $F_1,\dots,F_r$ be a 1-factorization of $G[V_0 \cup V_1]$. Then $G$ admits a Hamilton decomposition $C_1,\dots,C_r$ such that $E(C_j) \cap E(G[V_0 \cup V_1])=F_j$ for $j \in [1,r]$.
\end{Lemma}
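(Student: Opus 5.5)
The plan is to obtain the Hamilton decomposition from the detachment lemma (Lemma~\ref{L2.9}). I will build an edge-colored multigraph $H$ in which each color class is a connected ``amalgamated Hamilton cycle'', and then detach it back to $G$. The vertices of $V_0\cup V_1$ are kept as individual vertices of $H$ with $\eta=1$. The three parts $V_2,V_3,V_4$ are each contracted to a single vertex $w_2,w_3,w_4$, with $\eta(w_2)=\eta(w_4)=r$ and $\eta(w_3)=r-1\ge 1$.

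The edges of $H$ are as follows.
\begin{itemize}
\item All edges of $G[V_0\cup V_1]$, with every edge of $F_j$ colored $j$.
\item For each $u\in V_1$, $r$ parallel edges $uw_2$ colored $1,\dots,r$ (one of each color).
\item For each $u\in V_0$, $r$ parallel edges $uw_4$ colored $1,\dots,r$.
\item $r(r-1)$ edges $w_2w_3$ and $r(r-1)$ edges $w_3w_4$, with exactly $r-1$ of each color in each bundle.
\end{itemize}
$H$ has no loops, so the hypothesis ``$\eta(w)=1\Rightarrow \ell_H(w)=0$'' of Lemma~\ref{L2.9} holds trivially.

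Next I check the color degrees. A vertex $u\in V_0\cup V_1$ has one $F_j$-edge and one $j$-colored edge to $w_2$ or $w_4$, so $d_{H(j)}(u)=2=2\eta(u)$. At $w_2$ there are $r+1$ edges of color $j$ from $V_1$ and $r-1$ to $w_3$, giving $d_{H(j)}(w_2)=2r=2\eta(w_2)$; by symmetry $d_{H(j)}(w_4)=2r$. At $w_3$ we get $d_{H(j)}(w_3)=2(r-1)=2\eta(w_3)$. Hence $d_{H(j)}(w)/\eta(w)=2$ for every $w$ and every $j$. Each $H(j)$ is connected: $F_j$ matches $V_0$ to $V_1$, every vertex of $V_1$ reaches $w_2$, every vertex of $V_0$ reaches $w_4$, and $w_2w_3w_4$ is a path. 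So $\omega(H(j))=1$.

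Applying Lemma~\ref{L2.9} yields a loopless detachment $G'$. By (1), each $G'(j)$ is $2$-regular, and by (4) it is connected, so each $G'(j)$ is a Hamilton cycle $C_j$ of $G'$. By (3), the multiplicities are
\begin{itemize}
\item $m_{G'}(u,v)\approx r/(1\cdot r)=1$ for $u\in V_1$ and $v\in\psi^{-1}(w_2)$ (similarly for $V_0$ and $w_4$);
\item $m_{G'}(u,v)\approx r(r-1)/(r(r-1))=1$ between $\psi^{-1}(w_2)$ and $\psi^{-1}(w_3)$, and likewise between $\psi^{-1}(w_3)$ and $\psi^{-1}(w_4)$.
\end{itemize}
Every one of these multiplicities is therefore exactly $1$. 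By (2), there are no edges inside a fiber, since there are no loops. Edges within $V_0\cup V_1$ are untouched.

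Thus $G'$ is simple and isomorphic to $G$ via a map that is the identity on $V_0\cup V_1$ and sends $\psi^{-1}(w_i)$ bijectively onto $V_i$. This works because in $G$ each of these pairs of consecutive parts is complete bipartite and nonconsecutive parts are nonadjacent. Transporting $C_1,\dots,C_r$ through this isomorphism gives the required decomposition, and $E(C_j)\cap E(G[V_0\cup V_1])=F_j$ holds by construction.

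The main obstacle is the bookkeeping: the $\approx$ rounding must collapse to exact values. Every relevant ratio comes out integral, and the parity condition in (4) must hold for all colors simultaneously. Both are ensured by choosing the per-color multiplicities above; $r\ge2$ guarantees $\eta(w_3)\ge1$.
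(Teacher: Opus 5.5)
Your proposal is correct and follows the paper's proof essentially step for step: the same amalgamated multigraph with $\eta(w_2)=\eta(w_4)=r$ and $\eta(w_3)=r-1$, the same per-color multiplicities, and the same use of parts (1)--(4) of Lemma~\ref{L2.9} to get exact multiplicities, $2$-regular color classes and connectivity. One small remark: when $r=2$, the fiber over $w_3$ is a single vertex, so part (2) does not apply there; independence of $V_3$ instead follows trivially from the detachment being loopless, as the paper notes explicitly.
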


\begin{proof}
We construct an $r$-edge-colored multigraph $J$ by amalgamating the vertices in $V_2$, $V_3$, $V_4$, while leaving all vertices in $V_0 \cup V_1$ unchanged. Let $V(J)=V_0 \cup V_1 \cup \{w_2,w_3,w_4\}$, where $w_i$ corresponds to the amalgamation of all vertices in $V_i$, for $i \in [2,4]$. We use the color set $[1,r]$. For each $j \in [1,r]$, define the edges of color $j$ in $J$ as follows:
\begin{itemize}
\item assign color $j$ to every edge of the 1-factor $F_j$;
\item add one edge $vw_2$ of color $j$ for every $v \in V_1$;
\item add $r-1$ parallel edges between $w_2$ and $w_3$ and color them all with color $j$;
\item add $r-1$ parallel edges between $w_3$ and $w_4$ and color them all with color $j$;
\item add one edge $w_4u$ of color $j$ for every $u \in V_0$.
\end{itemize}

Notice that the edge-coloring used here is the coloring notion in Lemma \ref{L2.9}: it is merely an assignment of colors to edges and is not required to be proper. Define $\eta:V(J) \rightarrow N_+$ by $\eta(u)=1$ for $u \in V_0 \cup V_1$, $\eta(w_2)=r$, $\eta(w_3)=r-1$, and $\eta(w_4)=r$. Since $r \ge 2$, all values of $\eta$ are positive integers. Moreover, since $J$ is loopless, the conditions of Lemma \ref{L2.9} are satisfied. Let $\widetilde{G}$ be the resulting loopless $\eta$-detachment, with amalgamation function $\psi:V(\widetilde{G}) \rightarrow V(J)$. We identify the fibers $\psi^{-1}(w_2)$, $\psi^{-1}(w_3)$, $\psi^{-1}(w_4)$ with $V_2,V_3,V_4$, respectively. The edges of $\widetilde{G}$ and $J$ correspond bijectively, with corresponding endvertices mapped by $\psi$. When the edges are colored, corresponding edges retain their colors.
 
Claim 1: $\widetilde{G} \cong G$.
\begin{proof}
For $i=2,4$, we have $\eta(w_i)=r \ge 2$, and hence Lemma \ref{L2.9}(2) gives $E(\widetilde{G}[V_i])=\emptyset$. If $r \ge 3$, the same argument applies to $V_3$. If $r=2$, then $|V_3|=1$, and hence we still have $E(\widetilde{G}[V_3])=\emptyset$. Together with the fact that the sets $V_0$ and $V_1$ are not detached, this implies that all $V_i$ with $i \in [0,4]$ remain independent. Now, we consider the edges between consecutive parts.

Fix $v \in V_1$. For each of the $r$ colors, there is exactly one edge joining $v$ to $w_2$. Hence, $m_{J}(v,w_2)=r$. Since $\eta(v)=1$ and $\eta(w_2)=r$, we have that for every $z \in V_2=\psi^{-1}(w_2)$, $m_{\widetilde{G}}(v,z) \approx \frac{m_{J}(v,w_2)}{\eta(v)\eta(w_2)}=\frac{r}{1 \cdot r}=1$ by Lemma \ref{L2.9}(3). Therefore, $\widetilde{G}[V_1 \cup V_2] \cong K_{r+1,r}$. The same argument yields $\widetilde{G}[V_4 \cup V_0] \cong K_{r,r+1}$. There are $r-1$ edges of each of the $r$ colors between $w_2$ and $w_3$. Thus, $m_{J}(w_2,w_3)=r(r-1)$. For every $z_2 \in V_2 = \psi^{-1}(w_2)$ and $z_3 \in V_3=\psi^{-1}(w_3)$, we get $m_{\widetilde{G}}(z_2,z_3) \approx \frac{r(r-1)}{r(r-1)}=1$ by Lemma \ref{L2.9}(3). Hence, $\widetilde{G}[V_2 \cup V_3] \cong K_{r,r-1}$. The same argument yields $\widetilde{G}[V_3 \cup V_4] \cong K_{r-1,r}$. The vertices of $V_0 \cup V_1$ are not detached. Since $F_1,\dots,F_r$ form a 1-factorization of $G[V_0 \cup V_1]$, the edges between $V_0$ and $V_1$ in $J$, and hence in $\widetilde{G}$, are exactly the original edges of $G[V_0 \cup V_1]$. Therefore, $\widetilde{G}[V_0 \cup V_1] \cong K_{r+1,r+1}-M$. Finally, if two vertices of $J$ corresponding to nonconsecutive parts are not joined by an edge, then no additional edges are created by Lemma \ref{L2.9}(3).

Since each $V_i$ with $i \in [0,4]$ remains independent and the above isomorphisms hold, we have $\widetilde{G} \cong G$. In particular, $\widetilde{G}$ is simple. From now on, we identify $\widetilde{G}$ with $G$.
\end{proof}

Claim 2: For every $j \in [1,r]$, the color-$j$ subgraph $G(j)$ is a spanning 2-regular subgraph of $G$.
\begin{proof}
Fix $j \in [1,r]$. We compute the degree of each vertex in $J(j)$. For every $u \in V_0$, exactly one edge of $F_j$ is incident with $u$, and there is exactly one edge assigned color $j$ joining $u$ to $w_4$. Hence, $d_{J(j)}(u)=2$. Similarly, for every $v \in V_1$, $d_{J(j)}(v)=2$. At the amalgamated vertices, $d_{J(j)}(w_2)=|V_1|+(r-1)=(r+1)+(r-1)=2r$, $d_{J(j)}(w_3)=(r-1)+(r-1)=2(r-1)$, and $d_{J(j)}(w_4)=(r-1)+|V_0|=(r-1)+(r+1)=2r$. Consequently, $\frac{d_{J(j)}(u)}{\eta(u)}=2$ for $u \in V_0 \cup V_1$, $\frac{d_{J(j)}(w_2)}{\eta(w_2)}=\frac{2r}{r}=2$, $\frac{d_{J(j)}(w_3)}{\eta(w_3)}=\frac{2(r-1)}{r-1}=2$ and $\frac{d_{J(j)}(w_4)}{\eta(w_4)}=\frac{2r}{r}=2$. Thus, for every $w \in V(J)$, $\frac{d_{J(j)}(w)}{\eta(w)}=2$. By Lemma \ref{L2.9}(1), every vertex $u \in V(G)$ satisfies $d_{G(j)}(u)=2$. In particular, every vertex of $G$ is incident with an edge assigned color $j$, and hence $V(G(j))=V(G)$. Hence, $G(j)$ is a spanning 2-regular subgraph of $G$. 
\end{proof}
At this stage, $G(j)$ may be a disjoint union of cycles. It remains to prove that it is connected. For every $j \in [1,r]$, $J(j)$ is connected. Indeed, every vertex of $V_1$ is adjacent to $w_2$, which is in turn adjacent to $w_3$; $w_3$ is adjacent to $w_4$, and every vertex of $V_0$ is adjacent to $w_4$. Furthermore, $F_j$ is a perfect matching between $V_0$ and $V_1$. Since $r \ge 2$, there is at least one edge of color $j$ between $w_2$ and $w_3$, as well as between $w_3$ and $w_4$. Therefore, $\omega(J(j))=1$. Moreover, $\frac{d_{J(j)}(w)}{\eta(w)}=2$ is an even integer for every $w \in V(J)$. By Lemma \ref{L2.9}(4), we get $\omega(G(j))=\omega(J(j))=1$. Combining Claim 2 with the above equality, $G(j)$ is a connected spanning 2-regular graph. Hence, $G(j)$ is a Hamilton cycle. Define $C_j:=G(j)$ for $j \in [1,r]$. Since the color classes partition $E(G)$, we have $E(G)=E(C_1)\dot\cup\cdots\dot\cup E(C_r)$, and therefore $C_1,\dots,C_r$ form a Hamilton decomposition of $G$.

Finally, the vertices of $V_0 \cup V_1$ are not detached. By construction, the edges of color $j$ between $V_0$ and $V_1$ are precisely the edges of $F_j$. Hence, for every $j \in [1,r]$, $E(C_j) \cap E(G[V_0 \cup V_1])=F_j$. This completes the proof. 
\end{proof}

\begin{Lemma}\label{L4.4}
Let $G$ be an $n$-vertex triangle-free $\Delta$-regular non-bipartite Class 1 graph. Suppose that $G$ is homomorphic to $C_5$ and $2n-5\Delta=2$. Then $\Delta$ is even. Let $r:=\frac{\Delta}{2}$. For an edge $bc \in E(G)$, let $B=N_G(b) \setminus \{c\}$, $C=N_G(c) \setminus \{b\}$, and $R=V(G) \setminus (N_G[b] \cup N_G[c])$. If $xy \in E(G[R])$, then $G$ admits a Hamilton decomposition $C_1,\dots,C_r$ such that $bc,xy \in E(C_j)$ for some $j \in [1,r]$.
\end{Lemma}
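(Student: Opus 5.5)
We first pin down the structure of $G$ using Lemma \ref{L4.31}. Let $f$ be a homomorphism to $C_5$ with parts $V_0,\dots,V_4$, sizes $n_i>0$, and the quantities $p_i,q_i$ defined there. Since $\sum p_i=2n-5\Delta=2$, each $p_i\ge 0$, and a positive $p_i$ forces a positive neighbour $p_{i\pm1}$, the only possibility is that exactly two consecutive indices have $p_i=1$ and the others are $0$. After relabelling we take $p_0=p_1=1$ and $p_2=p_3=p_4=0$.

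Then $q_{i-1}+q_i=0$ for $i=2,3,4$, so $q_1=q_2=q_3=q_4=0$. Moreover $q_0=n_0p_0=n_0$ and $q_0=n_1p_1=n_1$, so $n_0=n_1$. Hence every consecutive pair other than $(V_0,V_1)$ is complete bipartite. In $G[V_0\cup V_1]$ every vertex misses exactly one vertex on the other side, so $G[V_0\cup V_1]\cong K_{m,m}-M$ with $M$ a perfect matching and $m=n_0=n_1$.

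Next we solve for the sizes. The equations $p_2=p_3=p_4=0$ read $n_1+n_3=\Delta$, $n_2+n_4=\Delta$, $n_3+n_0=\Delta$, and the equations $p_0=p_1=1$ read $n_4+n_1=\Delta+1$, $n_0+n_2=\Delta+1$. Together these give $n_2=n_4=\Delta+1-m$ and $n_3=\Delta-m$. Substituting into $n_2+n_4=\Delta$ yields $2(\Delta+1-m)=\Delta$, so $\Delta=2m-2$ is even. With $r=\Delta/2$ we get $m=r+1$, $n_2=n_4=r$ and $n_3=r-1$, which is exactly the configuration of Lemma \ref{L4.3}. We need $r\ge 2$. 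If $r=1$ then $n_3=0$, contradicting $n_i>0$ (equivalently, $G$ would be a cycle and, being non-bipartite, have odd order).

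The remaining task is to locate $bc$ and $xy$ and choose a 1-factorization $F_1,\dots,F_r$ of $G[V_0\cup V_1]\cong K_{r+1,r+1}-M$ so that the Hamilton cycles from Lemma \ref{L4.3} put $bc$ and $xy$ in the same $C_j$. Lemma \ref{L4.3} only controls the edges inside $V_0\cup V_1$, so the argument relies on $bc$ and $xy$ both lying in $G[V_0\cup V_1]$. We verify this by a case analysis on the positions of $b$ and $c$, using the fact that a vertex of $V_i$ is adjacent to all of $V_{i\pm1}$ except at the $V_0$–$V_1$ interface.

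Suppose $bc$ lies between $V_i$ and $V_{i+1}$ for some pair other than $\{0,1\}$, for instance $b\in V_2$ and $c\in V_3$. Then $N[b]\cup N[c]$ contains $V_1\cup V_2\cup V_3\cup V_4$, so $R\subseteq V_0$. Since $V_0$ is independent, $G[R]$ has no edges, which contradicts $xy\in E(G[R])$. The same reasoning applies whenever $b$ or $c$ lies in $V_2\cup V_3\cup V_4$, because the neighbourhoods then cover all but at most one part. For example, if $b\in V_1$ and $c\in V_2$, then $N(c)\supseteq V_1\cup V_3$ and $N(b)\supseteq V_2$ together with all of $V_0$ except one vertex, so $R$ consists of at most one vertex of $V_0$ plus part of $V_4$. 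Any edge of $G[R]$ would have to join that vertex $u\in V_0$ to $V_4$. This case needs a careful check, and I expect it may in fact be excluded because $u=M(b)$ is then adjacent to $c$'s side; at worst we would need a variant of Lemma \ref{L4.3} with a prescribed edge at the $V_4$–$V_0$ interface.

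In the main case, $b\in V_0$ and $c\in V_1$ with $bc\notin M$. Then $R$ consists of $M(b)$, $M(c)$ and nothing from $V_2\cup V_4$, since those parts lie in the neighbourhoods of $c$ and $b$ respectively. Hence $R\subseteq\{M(b),M(c)\}\cup V_3$ up to adjacency, and the only possible edge is $xy=M(c)M(b)$, where $M(c)\in V_0$, $M(b)\in V_1$, and this edge is not in $M$. So $bc$ and $xy$ form a second matching of two disjoint edges in $K_{r+1,r+1}-M$. It therefore suffices to find a 1-factorization of $K_{r+1,r+1}-M$ with both edges in the same factor. Identify both sides with $\mathbb Z_{r+1}$ so that $M=\{(i,i)\}$ and $b=0$, $c=1$, $M(c)=1$, $M(b)=0$, making $bc=(0,1)$ and $xy=(1,0)$. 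The factors $F_s=\{(i,i+s)\}$ for $s\in[1,r]$ put $(0,1)$ in $F_1$ and $(1,0)$ in $F_r$. To fix this, apply a different bijection on one side, choosing a Latin-square-type factorization in which the transposition $(0\,1)$ appears among the factors. Concretely, we need a derangement permutation containing $0\mapsto1$ and $1\mapsto0$ that extends to a 1-factorization of $K_{r+1,r+1}-M$. This holds because $K_{r+1,r+1}-M-F$ is regular bipartite for any perfect matching $F$ of $K_{r+1,r+1}-M$, and König's theorem then factorizes the remainder. For $r+1\ge 3$, a derangement containing the 2-cycle $(0\,1)$ exists if $r+1\ne 3$. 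When $r=2$ we use the cycle $(0\,1\,\ldots)$ differently, and I expect the small case $r=2$ to require a direct check. Lemma \ref{L4.3} then produces the required $C_j$. The main obstacle is the case analysis showing that $bc,xy\in G[V_0\cup V_1]$.
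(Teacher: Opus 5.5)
Your structural derivation and your treatment of the case $b\in V_0$, $c\in V_1$ match the paper. The structural part covers the $p_i,q_i$ bookkeeping, the sizes $n_0=n_1=r+1$, $n_2=n_4=r$, $n_3=r-1$, and $G[V_0\cup V_1]\cong K_{r+1,r+1}-M$.

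\textbf{Main gap: the case $bc\in E(G[V_1\cup V_2])$.} The same case appears as $bc\in E(G[V_4\cup V_0])$ under the reflection $V_0\leftrightarrow V_1$, $V_2\leftrightarrow V_4$. This case is not excluded. Take $b\in V_1$, $c\in V_2$, and let $u\in V_0$ be the unique non-neighbour of $b$. Then $N[b]\cup N[c]=(V_0\setminus\{u\})\cup V_1\cup V_2\cup V_3$, so $R=V_4\cup\{u\}$. Since $u$ is adjacent to all of $V_4$, $G[R]\cong K_{1,r}$ has $r$ edges, and $xy=uz$ with $z\in V_4$.

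So your claim that the neighbourhoods ``cover all but at most one part'' whenever $b$ or $c$ lies in $V_2\cup V_3\cup V_4$ is false. The strategy of forcing both $bc$ and $xy$ into $G[V_0\cup V_1]$ cannot work. No variant of Lemma~\ref{L4.3} is needed, though:
\begin{itemize}
\item Take any 1-factorization and the decomposition $C_1,\dots,C_r$ from Lemma~\ref{L4.3}.
\item Since $E(C_1)\cap E(G[V_0\cup V_1])=F_1$ is a perfect matching, the second edge of $C_1$ at $b$ goes to some $c'\in V_2$. Likewise, the second edge at $u$ goes to some $z'\in V_4$.
\item All vertices of $V_2$ have neighbourhood $V_1\cup V_3$, and all vertices of $V_4$ have neighbourhood $V_3\cup V_0$. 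Hence swapping $c\leftrightarrow c'$ and $z\leftrightarrow z'$ is an automorphism of $G$.
\item Applying this automorphism to the decomposition gives a Hamilton decomposition whose first cycle contains both $bc$ and $uz=xy$. This is how the paper closes the case.
\end{itemize}

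\textbf{Smaller gap: the case $r=2$.} You leave $r=2$ open for ``a direct check'', but that check would fail. Here $K_{3,3}-M$ is a $6$-cycle, and a perfect matching containing $bc$ and $xy$ would have to use an edge of $M$. So its two perfect matchings separate $bc$ from $xy$, and Lemma~\ref{L4.3} cannot place them in one $C_j$. The case must instead be excluded by parity: $G$ is regular and Class~$1$, so $n=5r+1$ is even. Hence $r$ is odd and $r\ge 3$. With $r\ge 3$ there are $r-1\ge 2$ remaining matched pairs, so a derangement on them exists. Your choice of $F_1$ containing $bc$ and $xy$, followed by a 1-factorization of the $(r-1)$-regular bipartite remainder, then goes through as in the paper.
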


\begin{proof}
Let $V_i,n_i,p_i$ and $q_i$ be as in Lemma \ref{L4.31} for $i \in [0,4]$. By Lemma \ref{L4.31}, we get $\sum_{i=0}^{4}p_i=2$, $p_i \ge 0$, and $n_i>0$ for $i \in [0,4]$. Now, we determine the structure of $G$.

Without loss of generality, $p_0=p_1=1$ and $p_2=p_3=p_4=0$. Indeed, Lemma \ref{L4.31} shows that if $p_i>0$, then $p_{i-1}>0$ or $p_{i+1}>0$. Since $\sum_{i=0}^{4}p_i=2$, either one of the numbers $p_i$ with $i \in [0,4]$ equals 2, or exactly two of them equal 1. The first possibility contradicts Lemma \ref{L4.31}. In the second possibility, the two positive terms must be consecutive. Recall $p_i=n_{i-1}+n_{i+1}-\Delta$. From $p_2=p_3=p_4=0$, we have $n_1+n_3=\Delta$, $n_2+n_4=\Delta$, and $n_3+n_0=\Delta$. Also, $p_0=p_1=1$ gives $n_4+n_1=\Delta+1$ and $n_0+n_2=\Delta+1$. Comparing $n_1+n_3=\Delta$ with $n_3+n_0=\Delta$ yields $n_0=n_1$. Similarly, from $n_4+n_1=\Delta+1$ and $n_0+n_2=\Delta+1$, we get $n_2=n_4$. Hence, $2n_2=n_2+n_4=\Delta$. It follows that $\Delta$ is even. Let $r=\Delta/2$. Therefore, we have $n_0=n_1=r+1$, $n_2=n_4=r$, and $n_3=r-1$. In particular, $n=5r+1$. Since $G$ is a regular Class 1 graph, $n$ is even. Then $n=5r+1$ implies that $r$ is odd. Moreover, $r \ge 3$ since $n_3=r-1 \ge 1$.

We now determine the edges between consecutive parts. From $q_{i-1}+q_i=n_ip_i$ and $p_2=p_3=p_4=0$, we get $q_1=q_2=q_3=q_4=0$. Thus, $G[V_1 \cup V_2]$, $G[V_2 \cup V_3]$, $G[V_3 \cup V_4]$, and $G[V_4 \cup V_0]$ are complete bipartite graphs. Furthermore, from $q_{i-1}+q_i=n_ip_i$ with $i=0$, we have $q_0=n_0p_0=r+1$. For every $u \in V_0$, $|V_4|+|V_1|=r+(r+1)=2r+1=\Delta+1$. Since $d_{G}(u)=\Delta$ and $u$ is adjacent to every vertex of $V_4$, exactly one vertex in $V_1$ is not adjacent to $u$. Similarly, for every vertex of $V_1$, exactly one vertex in $V_0$ is not adjacent to it. Hence, the missing edges between $V_0$ and $V_1$ form a perfect matching $M$. Therefore, $G[V_0 \cup V_1] \cong K_{r+1,r+1}-M$.

We now locate the edge $bc$. By the symmetry of the five-part structure, it suffices to consider the following three cases. First, suppose that $bc \in E(G[V_2 \cup V_3])$. Since $G[V_1 \cup V_2]$, $G[V_2 \cup V_3]$ and $G[V_3 \cup V_4]$ are complete bipartite graphs, we have $R=V_0$. But $V_0$ is independent, so $E(G[R])=\emptyset$, contradicting the fact $xy \in E(G[R])$. Thus, this case cannot occur, and it remains to consider the following two cases.

Case 1. $bc \in E(G[V_0 \cup V_1])$.

Let $V_0=\{u_0,\dots,u_r\}$, $V_1=\{v_0,\dots,v_r\}$, and $M=\{u_iv_i:i \in [0,r]\}$ after relabeling. Since $bc \in E(G[V_0 \cup V_1])$, we may write $bc=u_iv_j$ with $i,j \in [0,r]$ and $i \neq j$. Now we get $R=V_3 \cup \{u_j,v_i\}$. Note that the set $V_3$ is independent and has no neighbors in $\{u_j,v_i\}$, while $u_jv_i \in E(G)$ because $i \neq j$. Hence, $E(G[R])=\{u_jv_i\}$ and $xy=u_jv_i$. The two edges $u_iv_j,u_jv_i$ are disjoint. Excluding their endvertices in $V_0$ and $V_1$, each of the two remaining parts contains $r-1 \ge 2$ vertices. Let $V_{0}'=\{u_{s_1},u_{s_2},\dots,u_{s_{r-1}}\}$ and $V_{1}'=\{v_{s_1},v_{s_2},\dots,v_{s_{r-1}}\}$ be the sets of the remaining vertices. Thus, after relabeling, there exists a perfect matching $F_1=\{u_iv_j,u_jv_i,u_{s_1}v_{s_2},u_{s_2}v_{s_3},\dots,u_{s_{r-1}}v_{s_1}\}$ of $G[V_0 \cup V_1]$ containing both $bc=u_iv_j$ and $xy=u_jv_i$. Since $G[V_0 \cup V_1]-F_1$ is an $(r-1)$-regular bipartite graph, it has a 1-factorization $F_2,\dots,F_r$. Together with $F_1$, this gives a 1-factorization $F_1,\dots,F_r$ of $G[V_0 \cup V_1]$. By Lemma \ref{L4.3}, $G$ admits a Hamilton decomposition $C_1,\dots,C_r$ such that $E(C_1) \cap E(G[V_0 \cup V_1])=F_1$. Then we get $bc,xy \in E(C_1)$.

Case 2. $bc \in E(G[V_1 \cup V_2])$.

We may assume without loss of generality that $b=v_i \in V_1$ and $c \in V_2$ for some $i \in [0,r]$, where $u_i \in V_0$ is the unique vertex not adjacent to $v_i$. Now we get $R=V_4 \cup \{u_i\}$. Thus, $xy = u_iz$ for some $z \in V_4$ since $V_4$ is independent and $u_i$ is adjacent to every vertex of $V_4$. Since $G[V_0 \cup V_1]$ is an $r$-regular bipartite graph, let $F_1,\dots,F_r$ be a 1-factorization of $G[V_0 \cup V_1]$. By Lemma 4.4, $G$ admits a Hamilton decomposition $C_1,\dots,C_r$ such that $E(C_j) \cap E(G[V_0 \cup V_1])=F_j$ for every $j \in [1,r]$. Consequently, each vertex of $V_0 \cup V_1$ is incident with exactly one edge of $F_j$ in $C_j$. We have that the other edge of $C_j$ at $v_i$ joins it to some $c' \in V_2$, and that the other edge of $C_j$ at $u_i$ joins it to some $z' \in V_4$. All vertices of $V_2$ have neighborhood $V_1 \cup V_3$, and all vertices of $V_4$ have neighborhood $V_3 \cup V_0$. Hence, we may assume without loss of generality that $c'=c$ and $z'=z$. Then $bc,xy \in E(C_j)$ for some $j \in [1,r]$, where $bc=v_ic$ and $xy=u_iz$. This completes the proof. 
\end{proof}

\section{Proof of Theorem 1.2}
\begin{proof}
Since $G$ is a regular Class 1 graph, $n$ is even. Moreover, $G^*$ is overfull, and so $G^*$ is of Class 2. Moreover, $|E(G)|=3n$. Since $G$ is triangle-free, $3n \le \frac{n^2}{4}$ by Lemma \ref{L2.6}. Hence, $n \ge 12$. On the other hand, $6>\frac{n}{3}$ implies $n<18$. Therefore, $n \in \{12,14,16\}$. If $n \in \{12,14\}$, then $6>\frac{2n}{5}$, and hence $G^\ast$ is $6$-critical by Proposition \ref{C2.1}. Thus, it remains to consider the case $n=16$. If the vertex-splitting is non-extreme, then $G^\ast$ is $6$-critical since $6=\Delta(G)>\frac{n-1}{3}=\frac{16-1}{3}$ by Theorem \ref{T1.1}.

We may therefore assume that the vertex-splitting is extreme. Equivalently, $G^*$ is obtained from $G$ by subdividing some edge $bc \in E(G)$ with a new vertex $a$, replacing $bc$ by the path $(b,ba,a,ac,c)$. If $G$ is bipartite, then $G^*$ is 6-critical by Lemma \ref{L2.4}. Thus, we may further assume that $G$ is non-bipartite. In order to prove that $G^*$ is 6-critical, we only need to show that every edge of $G^*$ is critical. Suppose to the contrary that there exists an edge $xy \in E(G^*)$ that is not a critical edge of $G^*$. 

Let $B=N_{G^*}(b) \setminus \{a\}, C=N_{G^*}(c) \setminus \{a\}$, and $R=V(G^*) \setminus (N_{G^*}[b] \cup N_{G^*}[c])$. By Lemma \ref{L4.1}, $xy \in E(G^*[R])$. Since $a,b,c \notin R$ and $G^*$ differs from $G$ only by replacing the edge $bc$ with the path $(b,ba,a,ac,c)$, we have $G^*[R]=G[R]$, and hence $xy \in E(G[R])$. Since $\delta(G)=6>\frac{16}{3}=\frac{n}{3}$, Lemma \ref{L2.8} implies that either $G$ is homomorphic to $C_5$, or $G$ contains the Möbius ladder $\Gamma$ as a subgraph. We consider these two cases separately.

Case 1. $G$ is homomorphic to $C_5$.

In this case, $2n-5\Delta=2 \times 16-5 \times 6=2$. Since $xy \in E(G[R])$, $G$ admits a Hamilton decomposition $C_1,C_2,C_3$ such that $bc,xy \in E(C_1)$ by Lemma \ref{L4.4}, after relabeling the Hamilton cycles if necessary. By Lemma \ref{L4.2}, $xy$ is a critical edge of $G^*$, contradicting the hypothesis that $xy$ is not a critical edge.

Case 2. $G$ contains $\Gamma$ as a subgraph.

Let $X=\{x_0,x_1,\dots,x_7\}$ be the vertex set of $\Gamma$ contained in $G$, where all subscripts in this case are taken modulo 8, and $N_{\Gamma}(x_i)=\{x_{i-1},x_{i+1},x_{i+4}\}$ for $i \in [0,7]$. Let $Y=V(G) \setminus X$. Obviously, $|Y|=8$. For a vertex $z$, write $N_X(z)=N_G(z) \cap X$, $N_Y(z)=N_G(z) \cap Y$, $d_X(z)=|N_X(z)|$, and $d_Y(z)=|N_Y(z)|$. Note that any two vertices of $\Gamma$ that are not adjacent have a common neighbor in $\Gamma$. Therefore, an additional edge in $G[X]$ would create a triangle. Since $G$ is triangle-free, $G[X] \cong \Gamma$. We next determine the structure of $G[Y]$.

Claim: The vertices of $Y$ can be labeled as $Y=\{y_0,\dots,y_7\}$ so that, for every $i \in [0,7]$, $N_X(y_i)=\{x_i,x_{i+2},x_{i+5}\}$ and $N_Y(y_i)=\{y_{i-1},y_{i+1},y_{i+4}\}$. In particular, $G[Y] \cong \Gamma$.
\begin{proof}
For $i \in [0,7]$, we have $d_Y(x_i)=3$ since $G$ is 6-regular and every $x_i$ has degree 3 in $G[X]$. Consequently, $|E_G(X,Y)|=8 \times 3=24$, where $E_G(X,Y)=\{xy \in E(G):x \in X,\ y \in Y\}$. We next determine the neighborhoods of the vertices in $Y$. Since $G$ is triangle-free, we get that $N_X(y)$ is an independent set of $G[X] \cong \Gamma$ for every $y \in Y$. Thus, for every $y \in Y$, $d_X(y) \le 3$ since the independence number $\alpha(\Gamma) = 3$. On the other hand, since $|E_G(X,Y)|=24$, we have $\sum_{y \in Y}d_X(y)=24$. Together with $|Y|=8$ and $d_X(y) \le 3$ for every $y \in Y$, this implies $d_X(y)=3$, and hence $d_Y(y)=3$ since $G$ is $6$-regular. It is easy to see that the eight independent sets of size $3$ in $\Gamma$ are $S_i:= \{x_i,x_{i+2},x_{i+5}\}$, where $i \in [0,7]$. Therefore, for each $y \in Y$, the fact that $d_X(y)=3$ forces $N_X(y)=S_i$ for some $i$. Let $m_i:=\left|\{y \in Y:N_X(y)=S_i\}\right|$ for $i \in [0,7]$. For each $j \in [0,7]$, the vertex $x_j$ belongs to precisely $S_j$, $S_{j-2}$, and $S_{j-5}$. Since $d_Y(x_j)=3$, we have $m_j+m_{j-2}+m_{j-5}=3$, and replacing $j$ by $j+5$ gives $m_{j+5}+m_{j+3}+m_j=3$. Together with $j-5 \equiv j+3 \pmod 8$, it follows that $m_{j-2}=m_{j+5}=m_{j-3}$. As $j$ is arbitrary, we conclude that $m_0=m_1=\cdots=m_7$. Moreover, $\sum_{i=0}^{7}m_i=|Y|=8$, which implies $m_i=1$. Hence, for each $i \in [0,7]$, let $y_i$ denote the unique vertex of $Y$ satisfying $N_X(y_i)=S_i=\{x_i,x_{i+2},x_{i+5}\}$. Suppose that $y_iy_j \in E(G)$. Since $G$ is triangle-free,  we have $N_X(y_i) \cap N_X(y_j)=S_i \cap S_j=\emptyset$. For $i \in [0,7]$, since $S_i=\{x_i,x_{i+2},x_{i+5}\}$, it follows that $S_i \cap S_j=\emptyset$ if and only if $j-i \equiv \pm 1,4 \pmod 8$. Hence, $N_Y(y_i) \subseteq \{y_{i-1},y_{i+1},y_{i+4}\}$. As $d_Y(y_i)=3$, this forces $N_Y(y_i)=\{y_{i-1},y_{i+1},y_{i+4}\}$. This proves that $G[Y] \cong \Gamma$.
\end{proof}
We now identify the complete structure of $G$. We have $N_G(x_i)=N_G(y_{i-1})$ for $i \in [0,7]$ since $N_X(x_i)=N_X(y_{i-1})=\{x_{i-1},x_{i+1},x_{i+4}\}$ and $N_Y(x_i)=N_Y(y_{i-1})=\{y_{i-2},y_i,y_{i+3}\}$ by the Claim above. Define $P_i:= \{x_i,y_{i-1}\}$ for $i \in [0,7]$. Since $x_i \notin N_X(y_{i-1})$, each $P_i$ is an independent set of size 2. Let $i \neq j$. If $x_ix_j \in E(G[X])$, then $N_G(x_i)=N_G(y_{i-1})$ implies $y_{i-1}x_j \in E(G)$, while $N_G(x_j)=N_G(y_{j-1})$ implies $x_iy_{j-1} \in E(G)$. By applying the equality $N_G(x_i)=N_G(y_{i-1})$ again, we obtain $y_{i-1}y_{j-1} \in E(G)$. Therefore, $G[P_i \cup P_j] \cong K_{2,2}$. Conversely, if $x_ix_j \notin E(G[X])$, then $E(G[P_i \cup P_j])=\emptyset$. Note that $G[X] \cong \Gamma$. Thus, $G$ (see Figure~\ref{fig:figure_G}) is obtained from $\Gamma$ by replacing each vertex with an independent set of size $2$ and each edge with $K_{2,2}$.
\begin{figure}[htbp]
	\begin{center}
		\centering
		\scalebox{0.4}{\includegraphics{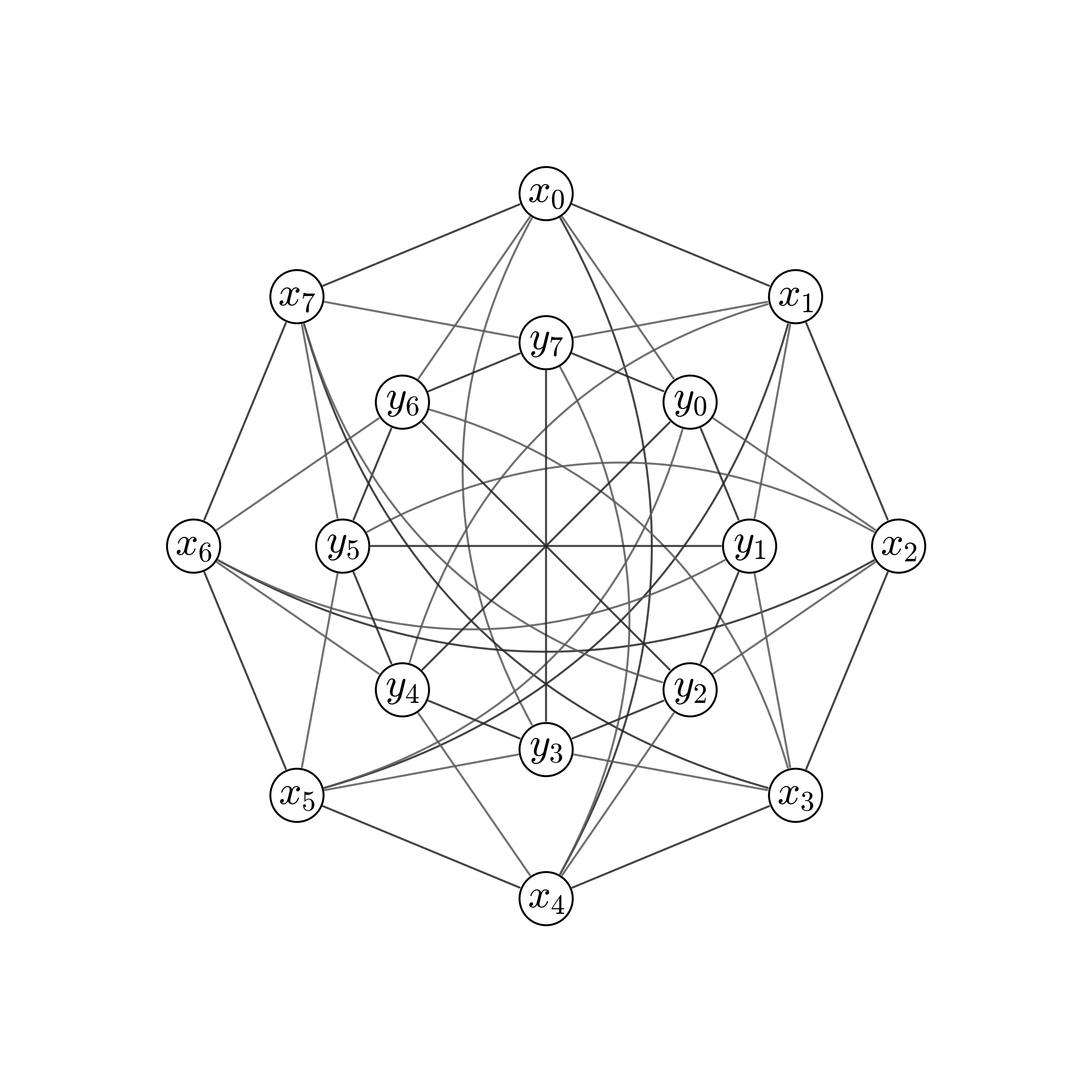}}
		\caption{The graph $G$ in  Case~2 of the proof
of Theorem \ref{T1.2}.}
        \label{fig:figure_G}
	\end{center}
\end{figure}
There are two types of edges in $\Gamma$: the edges of the outer $8$-cycle and the four opposite chords. By symmetry, it suffices to consider one representative of each type.

Subcase 2.1. $bc$ corresponds to an edge of the 8-cycle.

Suppose that $b \in P_0$ and $c \in P_1$. Write $P_0=\{b,b'\}$ and $P_1=\{c,c'\}$. Since $N_{G[X]}(x_0)=\{x_1,x_4,x_7\}$, we have $N_G(b)=P_1 \cup P_4 \cup P_7$ and hence $B=P_4 \cup P_7 \cup \{c'\}$. Similarly, $N_{G[X]}(x_1)=\{x_0,x_2,x_5\}$, so $C=P_2 \cup P_5 \cup \{b'\}$. It follows that $R=P_3 \cup P_6$. Since $x_3x_6 \notin E(G[X])$, we have $E(G[R])=E(G[P_3 \cup P_6])=\emptyset$, contradicting the fact $xy \in E(G[R])$. 

Subcase 2.2. $bc$ corresponds to an opposite edge of $\Gamma$.

Suppose that $b \in P_0$ and $c \in P_4$. Write $P_0=\{b,b'\}$ and $P_4=\{c,c'\}$. Since $N_{G[X]}(x_0) = \{x_1,x_4,x_7\}$, we have $N_G(b)=P_1 \cup P_4 \cup P_7$ and hence $B=P_1 \cup P_7 \cup \{c'\}$. Similarly, $N_{G[X]}(x_4)=\{x_0,x_3,x_5\}$, we have $N_G(c)=P_0 \cup P_3 \cup P_5$ and hence $C=P_3 \cup P_5 \cup \{b'\}$. Therefore, $R=P_2 \cup P_6$. Since $x_2x_6 \in E(G[X])$, we have $G[P_2 \cup P_6] \cong K_{2,2}$. Consequently, $G[R] \cong C_4$. Since $xy \in E(G[R])$, $xy$ is one of the four edges between $P_2$ and $P_6$. Together with the fact that the two vertices in each $P_i$ have identical neighborhoods for $i \in [0,7]$, we may assume $b=x_0$, $c=x_4$, $xy=x_2x_6$. To give a Hamilton decomposition of $G$, we now define three cycles $C_1$, $C_2$, and $C_3$ by their vertex sequences, where edges are implicitly represented by consecutive vertices. Define
\begin{equation}
	\begin{aligned}
		C_1=&(x_0,x_4,y_7,x_1,y_1,y_0,x_2,x_6,x_5,y_5,y_4,y_3,x_3,x_7,y_2,y_6,x_0),
	\end{aligned} \notag
\end{equation}
\begin{equation}
	\begin{aligned}
		C_2=&(x_0,x_1,x_2,x_3,y_1,y_2,x_4,x_5,y_0,y_4,x_6,y_6,y_5,x_7,y_7,y_3,x_0),
	\end{aligned} \notag
\end{equation}
and
\begin{equation}
	\begin{aligned}
		C_3=&(x_0,x_7,x_6,y_1,y_5,x_2,y_2,y_3,x_5,x_1,y_4,x_4,x_3,y_6,y_7,y_0,x_0).
	\end{aligned} \notag
\end{equation}

Each of $C_1$, $C_2$, and $C_3$ contains all 16 vertices exactly once and every consecutive pair is adjacent. Therefore, each of them forms a Hamilton cycle. Clearly, $E(C_1)$, $E(C_2)$, and $E(C_3)$ are pairwise disjoint. Since each Hamilton cycle contains 16 edges and $|E(G)|=\frac{16 \times 6}{2}=48$, we have $E(G)=E(C_1)\dot{\cup}E(C_2)\dot{\cup}E(C_3)$. Hence, $C_1$, $C_2$, and $C_3$ form a Hamilton decomposition of $G$. Furthermore, since $bc=x_0x_4 \in E(C_1)$ and $xy=x_2x_6 \in E(C_1)$, we have $bc,xy \in E(C_1)$. By Lemma \ref{L4.2}, $xy$ is a critical edge of $G^*$, contradicting the hypothesis that $xy$ is not a critical edge. This completes the proof.
\end{proof}

\section{Proofs of Theorems 1.3 and 1.4}

We begin with the proof of Theorem \ref{T1.3}. 
\begin{proof}
Since $G$ is a regular Class 1 graph, $n$ is even. Moreover, $G^*$ is overfull, and so $G^*$ is of Class 2. For $n>24$, we have $\Delta \ge \frac{2n-3}{5}>\frac{n-1}{3}$. Hence, if the vertex-splitting is non-extreme, then $G^*$ is $\Delta$-critical by Theorem \ref{T1.1}. We may therefore assume that the vertex-splitting is extreme. Equivalently, $G^*$ is obtained from $G$ by subdividing an edge $bc \in E(G)$ with a new vertex $a$, replacing $bc$ by the path $(b,ba,a,ac,c)$. If $G$ is bipartite, then the result follows from Lemma \ref{L2.4}. Thus, we may assume that $G$ is non-bipartite. In order to prove that $G^*$ is $\Delta$-critical, we only need to show that every edge of $G^*$ is critical. Suppose to the contrary that there exists an edge $xy \in E(G^*)$ that is not a critical edge of $G^*$.

Let $B=N_{G^*}(b) \setminus \{a\}$, $C=N_{G^*}(c) \setminus \{a\}$, and $R=V(G^*) \setminus (N_{G^*}[b] \cup N_{G^*}[c])$. By Lemma \ref{L4.1}, we have $xy \in E(G^*[R])$. Since $a,b,c \notin R$ and $G^*$ differs from $G$ only by replacing the edge $bc$ with the path $(b,ba,a,ac,c)$, we have $G^*[R]=G[R]$, and hence $xy \in E(G[R])$. Since $G$ is triangle-free and non-bipartite, Lemma \ref{L2.3} gives $\Delta \le \frac{2n}{5}$. Define $t:=2n-5\Delta$. From $\Delta \le \frac{2n}{5}$, it follows that $t \ge 0$. On the other hand, $\Delta \ge \frac{2n-3}{5}$ yields $t=2n-5\Delta \le 3$. Hence, $t \in \{0,1,2,3\}$. Since $n>24$, $\delta=\Delta \ge \frac{2n-3}{5}>\frac{3n}{8}$. Hence, Lemma \ref{L2.7} implies that $G$ is homomorphic to $C_5$. Thus, the conditions of Lemma \ref{L4.31} are satisfied, and we follow its notation. Now we have $\sum_{i=0}^{4}p_i=2n-5\Delta=t$.

By Lemma \ref{L4.31}, if $p_i>0$, then $p_{i-1}>0$ or $p_{i+1}>0$. We now consider the four possible values of $t$ to get contradictions.

Case 1. $t=0$.

Since $\sum_{i=0}^{4}p_i=t=0$ and each $p_i \ge 0$, it follows that $p_i=0$ for every $i \in [0,4]$. Then Lemma \ref{L4.31} implies that $q_{i-1}+q_i=n_ip_i=0$ for every $i$, and since $q_i \ge 0$, it follows that $q_i=0$ for every $i \in [0,4]$. Therefore, every two consecutive parts induce a complete bipartite graph. Without loss of generality, assume $b \in V_0$ and $c \in V_1$. Since $N_G(b)=V_4 \cup V_1$ and $N_G(c)=V_0 \cup V_2$, it follows that $B=(V_4 \cup V_1) \setminus \{c\}$, $C=(V_0 \cup V_2) \setminus \{b\}$, and $R=V_3$. But $V_3$ is independent, and hence $E(G[R])=\emptyset$, contradicting the fact $xy \in E(G[R])$.

Case 2. $t=1$.

Since $\sum_{i=0}^{4}p_i=t=1$, there is a unique $j \in \{0,1,2,3,4\}$ such that $p_j=1$, while $p_i=0$ for $i \neq j$. In particular, $p_{j-1}=p_{j+1}=0$, contradicting Lemma \ref{L4.31}.

Case 3. $t=2$.

In this case $2n-5\Delta=2$. Since $xy \in E(G[R])$, all the assumptions of Lemma \ref{L4.4} are satisfied. Therefore, $G$ admits a Hamilton decomposition $C_1,\dots,C_r$ such that $bc,xy \in E(C_j)$ for some $j \in [1,r]$. Without loss of generality, we may assume that $bc,xy \in E(C_1)$. By Lemma \ref{L4.2}, $xy$ is a critical edge of $G^*$, contradicting our choice of $xy$.

Case 4. $t=3$.

It remains to consider $\sum_{i=0}^{4}p_i=3$. Recall that for every $i \in [0,4]$, if $p_i>0$, then $p_{i-1}>0$ or $p_{i+1}>0$ by Lemma \ref{L4.31}. Without loss of generality, the possible distributions of the nonnegative integers $p_i$ reduce to $(2,1,0,0,0)$ and $(1,1,1,0,0)$.

Subcase 4.1. $p_0=2$, $p_1=1$, and $p_2=p_3=p_4=0$.

Since $p_2=0$ and $q_{i-1}+q_i=n_ip_i$, we obtain $q_1+q_2=0$, which forces $q_1=q_2=0$. Similarly, $p_3=p_4=0$ gives $q_3=q_4=0$. Now apply $q_{i-1}+q_i=n_ip_i$ with $i=0$ and $i=1$. For $i=0$, we get $q_4+q_0=n_0p_0=2n_0$, and since $q_4=0$, it follows that $q_0=2n_0$. For $i=1$, we get $q_0+q_1=n_1p_1=n_1$, and since $q_1=0$, this gives $q_0=n_1$. Hence, $n_1=2n_0$. On the other hand, recall that $p_i=n_{i-1}+n_{i+1}-\Delta$. From $p_2=0$ we obtain $n_1+n_3=\Delta$, while $p_4=0$ yields $n_3+n_0=\Delta$. Since $n_1+n_3=\Delta$ and $n_3+n_0=\Delta$, we obtain $n_1=n_0$, contradicting the fact that $n_1=2n_0$ and $n_0>0$.

Subcase 4.2. $p_0=p_1=p_2=1$ and $p_3=p_4=0$. 

From $p_3=0$ and $q_{i-1}+q_i=n_ip_i$, we get $q_2+q_3=0$, hence $q_2=q_3=0$. Similarly, $p_4=0$ yields $q_3+q_4=0$, so $q_4=0$. Now apply $q_{i-1}+q_i=n_ip_i$ successively. For $i=0$, we get $q_4+q_0=n_0$, and since $q_4=0$, it follows that $q_0=n_0$. For $i=2$, we get $q_1+q_2=n_2$, and since $q_2=0$, this gives $q_1=n_2$. For $i=1$, we get $q_0+q_1=n_1$. Substituting $q_0=n_0$ and $q_1=n_2$, we obtain $n_1=n_0+n_2$. On the other hand, recall that $p_i=n_{i-1}+n_{i+1}-\Delta$. Since $p_1=1$, we have $n_0+n_2=\Delta+1$. Combining this with $n_1=n_0+n_2$, we get $n_1=\Delta+1$. Finally, since $p_0=1$, we have $n_1+n_4=\Delta+1$. Using $n_1=\Delta+1$, we get $n_4=0$, contradicting the fact $n_4>0$. This completes the proof.
\end{proof}

We next turn to the proof of Theorem \ref{T1.4}.

\begin{proof}
If $n > 24$, the conclusion follows from Theorem \ref{T1.3}. Hence, assume that $n \in [10,24]$. Since $G$ is a regular Class 1 graph, $n$ is even. If $G$ is bipartite, the conclusion follows from Lemma \ref{L2.4}. We may therefore assume that $G$ is non-bipartite. By Lemma \ref{L2.3}, $\Delta \leq \frac{2n}{5}$. Together with $\Delta > \frac{2n-3}{5}$, this implies that $t:=2n-5\Delta \in \{0,1,2\}$. The parity of $n$ and the integrality of $\Delta$ show that the possible triples $(n,\Delta,t)$ are $(10,4,0)$, $(16,6,2)$, $(18,7,1)$, and $(20,8,0)$. If $(n,\Delta,t)=(16,6,2)$, the conclusion follows from Theorem \ref{T1.2}. In the remaining cases, $\Delta>\frac{3n}{8}$, so $G$ is homomorphic to $C_5$ by Lemma \ref{L2.7}. We use the notation of Lemma \ref{L4.31}. If $t=1$, exactly one of the $p_i$ with $i \in [0,4]$ is positive, contradicting Lemma \ref{L4.31}. It remains to consider $t=0$. Since $\Delta>\frac{2n-3}{5}>\frac{n-1}{3}$, Theorem \ref{T1.1} settles every non-extreme vertex-splitting. We may therefore assume that the vertex-splitting is extreme. Equivalently, $G^*$ is obtained from $G$ by replacing $bc$ with the path $(b,ba,a,ac,c)$. Define $B$, $C$, and $R$ as in Lemma \ref{L4.1}. Since $\sum_{i=0}^{4}p_i=0$, Lemma \ref{L4.31} gives $p_i=q_i=0$ for $i \in [0,4]$. Thus, all consecutive pairs of parts are complete bipartite. After relabeling, assume that $b \in V_0$ and $c \in V_1$. Then $R=V_3$, and therefore $G[R]$ has no edges. Since $a,b,c \notin R$ and $G^*$ differs from $G$ only by replacing $bc$ with the path $(b,ba,a,ac,c)$, we have $G^*[R]=G[R]$, and hence $E(G^*[R])=\emptyset$. If $G^*$ contained an edge that is not critical, Lemma \ref{L4.1} would force both endvertices of this edge to lie in $R$, contradicting the fact $E(G^*[R])=\emptyset$. This completes the proof.
\end{proof}

\noindent {\bf Acknowledgements}

This work was supported by Hebei Natural Science Foundation (A2026208005) and Foundation of China Scholarship Council (202508130088).


\begin{thebibliography}{99}

\bibitem{AES1974}
B. Andrásfai, P. Erdős, V.T. Sós, {\it On the connection between chromatic number, maximal clique and minimal degree of a graph}, Discrete Math. {\bf 8} (1974), 205--218.

\bibitem{BR2012}
M.A. Bahmanian, C.A. Rodger, {\it Multiply balanced edge colorings of multigraphs}, J. Graph Theory {\bf 70} (3) (2012), 297--317.


\bibitem{BBG2005}
D. Bokal, G. Brinkmann, S. Grünewald,
{\it Chromatic-index-critical graphs of orders $13$ and $14$},
Discrete Math. {\bf 300} (2005), 16–29.

\bibitem{BV2020}
S. Bonvicini, A. Vietri, {\it A Möbius-type gluing technique for obtaining edge-critical graphs}, Ars Math. Contemp. {\bf 19} (2) (2020), 209--229.

\bibitem{Brandt1997}
S. Brandt, {\it On the structure of dense triangle-free graphs}, Combin. Probab. Comput. {\bf 8} (3) (1999), 237--245.	

\bibitem{BS1998}
G. Brinkmann, E. Steffen, {\it Chromatic-index-critical graphs of orders $11$ and $12$}, Europ. J. Combin. {\bf 19} (8) (1998), 889--900.

\bibitem{CCS2022}
Y. Cao, G. Chen, S. Shan, {\it An improvement to the Hilton-Zhao vertex-splitting conjecture}, Discrete Math. {\bf 345} (2022), 112902.

\bibitem{CY1983}
A.G. Chetwynd, H.P. Yap, {\it Chromatic index critical graphs of order 9},   Discrete Math. {\bf 47} (1983), 23--33.

\bibitem{FW1977}
S. Fiorini, R.J. Wilson, {\it Edge-Colourings of Graphs}, Research Notes in Maths., Pitman, London, 1977.

\bibitem{Gupta1967}
R.P. Gupta, {\it Studies in the Theory of Graphs}, PhD thesis, Tata Institute of Fundamental Research, Bombay, 1967.

\bibitem{Häggkvist1982}
R. Häggkvist, {\it Odd cycles of specified length in non-bipartite graphs}, in: B. Bollobás (Ed.), Graph Theory: Proceedings of the Conference on Graph Theory, Cambridge, North-Holland Math. Stud., vol. 62, North-Holland, Amsterdam, 1982, pp. 89–99.

\bibitem{HZ1997}
A.J.W. Hilton, C. Zhao, {\it Vertex-splitting and chromatic index critical graphs}, Discrete Appl. Math. {\bf 76} (1997), 205--211.

\bibitem{Hoyler1981}
I. Holyer, {\it The NP-completeness of edge-coloring}, SIAM J. Comput. {\bf 10} (4) (1981), 718--720.

\bibitem{Mantel1907}
W. Mantel, Problem 28, Solution by H. Gouwentak, W. Mantel, J. Teixeira de Mattes, F. Schuh and W.A. Wythoff, Wisk. Opgaven {\bf 10} (1907), 60--61.

\bibitem{QF2026}
X. Qi, Y. Feng, {\it On the Hilton-Zhao vertex-splitting conjecture}, Discrete Math. {\bf 349} (2026), 115341.

\bibitem{Song2002}
Z.-X. Song, {\it A further extension of Yap’s construction for $\Delta$-critical graphs}, Discrete Math.
{\bf 243} (2002), 283--290.

\bibitem{SSTF2012}
M. Stiebitz, D. Scheide, B. Toft, L.M. Favrholdt,
{\it Graph Edge Coloring: Vizing’s Theorem and Goldberg’s Conjecture}, Wiley Series in Discrete Mathematics
and Optimization, John Wiley \& Sons, Inc., Hoboken, NJ, 2012.

\bibitem{Vietri2015}
A. Vietri, {\it An analogy between edge colourings and differentiable manifolds, with a new perspective on 3-critical graphs}, Graphs Combin. {\bf 31} (6) (2015), 2425--2435.

\bibitem{Vizing1965-1}
V.G. Vizing, {\it Critical graphs with given chromatic class}, Diskret. Analiz. {\bf 5} (1965), 9--17.

\bibitem{Yap1981}
H.P. Yap, {\it A construction of chromatic index critical graphs}, J. Graph Theory {\bf 5} (1981), 159--163.
\end{thebibliography}
\end{document}